\numberwithin{equation}{section}
\newtheorem{theorem}{Theorem}[section]
\newtheorem{proposition}[theorem]{Proposition}
\newtheorem{lemma}[theorem]{Lemma}
\newtheorem{corollary}[theorem]{Corollary}
\newtheorem{claim}[theorem]{Claim}
\theoremstyle{definition}
\newtheorem{remark}[theorem]{Remark}
\newtheorem{definition}[theorem]{Definition}
\newtheorem{note}[theorem]{Note}
\newtheorem{question}[theorem]{Question}
\theoremstyle{remark}
\newtheorem*{prooflemma}{Proof of Lemma}
\newtheorem{notation}[theorem]{Notation}
\newcommand{\Z}{\mathbb{Z}}
\newcommand{\F}{\mathbb{F}}
\newcommand{\PP}{\mathbb{P}}
\newcommand{\Cs}{\mathscr{C}}
\newcommand{\Fs}{\mathscr{F}}
\newcommand{\mcC}{\mathcal{C}}
\newcommand{\cF}{\mathcal{F}}
\newcommand{\ds}{\displaystyle}
\newcommand{\barsQ}{\mid \! Q \! \mid}
\newcommand{\leftfloor}{\left\lfloor}
\newcommand{\rightfloor}{\right\rfloor}
\DeclareMathOperator{\Hom}{Hom}
\DeclareMathOperator{\rank}{rank}
\DeclareMathOperator{\Gal}{Gal}
\DeclareMathOperator{\Aut}{Aut}
\DeclareMathOperator{\PGL}{PGL}
\DeclareMathOperator{\Frob}{Frob}
\DeclareMathOperator{\Ker}{Ker}
\DeclareMathOperator{\Div}{div}
\DeclareMathOperator{\Jac}{Jac}
\DeclareMathOperator{\Spec}{Spec}
\title{Proportion of ordinarity in some families of curves over finite fields}
\author{Soumya Sankar}
\date{April 2019 }
\begin{document}
	\maketitle
	\begin{abstract}
	    A curve over a field of characteristic $p$ is called ordinary if the $p$-torsion of its Jacobian as large as possible, that is, an $\F_p$ vector space of dimension equal to its genus. In this paper we consider the following question: fix a finite field $\F_q$ and a family $\mathscr{F}$ of curves over $\F_q$. Then, what is the probability that a curve in this family is ordinary? We answer this question when $\Fs$ is either the Artin-Schreier family in any characteristic or the superelliptic family in characteristic 2. 
	\end{abstract}

	\section{Introduction}

	Let $\mcC$ be a smooth curve of genus $g$ over a field $k$. Then its Jacobian $\Jac(\mcC)$ is an abelian variety of dimension $g$. For each $n \in \Z_{>0}$, the $n$-torsion group scheme $\Jac[n]$ is a finite flat group scheme. When $(n,p)=1$, this group scheme is \'{e}tale, and as an abelian group, is isomorphic to $(\Z/n\Z)^{2g}$ over $\bar{k}$. When $n$ is not invertible in $k$, this group scheme is never \'{e}tale and  its isomorphism class over $\bar{k}$ depends significantly on the curve. In this paper we study the variation of this group scheme in some families of curves. In order to do so, we recall the definitions the following invariants:\\
	
	Let $G$ be a finite flat group scheme killed by $p$ over a field $k$ of characteristic $p$.
	\begin{definition}
	    We define the $a$-number of $G$ as
	    $$
	    a(G) := \dim_k \Hom(\alpha_p, G),
	    $$
	    where $\alpha_p$ is the affine group scheme $\Spec(k[x]/x^p)$, and the $\Hom$ is in the category of $k$-group schemes.
	\end{definition}

	\begin{definition}
	    The $p$-rank of $G$ is defined as $r(G)$ where
	    \[
	    G(\bar{k}) \cong (\Z/p\Z)^{r(G)}
	    \]
	    as abelian groups.
	 \end{definition}
 
	 For the purpose of this paper, we will only be interested in $G = \Jac(\mcC)[p]$. In this case, it is well known that $0 \le r(G) \le g(\mcC)$ and $0 \le a(G) + r(G) \le g$. The Jacobian is called \emph{ordinary} if $r(G) = g$ or equivalently, when $a(G) = 0$ \cite{AchterPries15}. By abuse of notation, we will denote the $a(\mcC)$ and $r(\mcC)$ to be the corresponding invariants of $\Jac(\mcC)[p]$. \\
	 
	Let $\mathscr{M}_g$ denote the moduli space of smooth curves of genus $g$. The study of $\mathscr{M}_g$ can take two different directions. One is to understand $\mathscr{M}_g (\overline{k})$, i.e. to gain a geometric understanding of $\mathscr{M}_g$ for a fixed $g$. A lot of work has been done in this area, some of which we will list later in this section. The second is to understand $\mathscr{M}_g(k)$ for a given $k$. For instance, one might ask if for a fixed $q$, the limit
	\begin{equation}
	\label{EqnMaincount}
	    \lim_{X \rightarrow \infty} \frac{ \#\{\mcC \in \mathscr{M}_g(\F_q) \mid \mcC \text{ ordinary }, q^g < X \} }{ \#\{ \mcC \in \mathscr{M}_g(\F_q) \mid q^g < X \}}
	\end{equation}
	exists. And if so, what is it? Very little is known about this question.
	In this paper, we ask what happens to the limit \eqref{EqnMaincount} when $\mathscr{M}_g$ is replaced by some other families of curves. Fix a family $\mathscr{F}$ of curves over $\F_q$ of arbitrary genus. Note that by a family, we mean a set of curves satisfying a particular property, which is not necessarily a family in any geometric sense. A typical example of a family is $\cup_{g \ge 0} \mathscr{M}_g (\F_q)$. Let $\mathscr{F}_{a} = \{\mcC \in \mathscr{F} \mid a(\mcC) = a \}$. We wish to study the probability that a randomly chosen $\mcC \in \Fs$ lies in $\Fs_0$ or in other words, what proportion of curves in the family $\mathscr{F}$ is ordinary. More precisely, we define:
	 
	 \begin{itemize}
	     \item $N(\Fs, X) = \# \{ \mcC \in \Fs \mid q^{g} < X \}$
	     \item $N(\Fs, a, X) = \# \{ \mcC \in \Fs_a \mid q^{g} < X \}$. \\
	 \end{itemize}
	 
	 \begin{question}
	 \label{mainquestion}
	    For a family $\Fs$ of curves over a fixed finite field $\F_q$, does the limit,
	    $$
	    P(\Fs,0):= \lim_{X \rightarrow \infty} \frac{N(\Fs, 0, X)}{N(\Fs, X)}
	    $$
	    exist, and if so, what is its value?
	 \end{question}

	 In Section 2, we describe two different families for which we will answer Question \ref{mainquestion}, namely:
	 	\begin{itemize}
	 	    \item Artin Schreier curves in arbitrary positive characteristic.
	 		\item Superelliptic curves over a finite field of characteristic 2.
	 	\end{itemize}
	 In each of the above cases, the criteria for ordinarity can be described combinatorially in terms of the ramification invariants of the curves in question.\\
	 
	 In Section 3, we prove the following results:
	 
	 \begin{theorem}[Corollary \ref{ArtinSchreierMainTheorem}]
	 \label{ASStatement1}
	    The probability that an Artin-Schreier curve (under the assumptions of \S 2) over $\F_q$, $q$ a power of $p$, is ordinary is non-zero for $p=2$ and zero for all odd primes.
	  \end{theorem}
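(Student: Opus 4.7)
The combinatorial setup of $\S 2$ encodes each Artin--Schreier cover $\mcC \to \PP^1_{\F_q}$ in $\Fs$ by the data of (i) a squarefree effective divisor $D = \sum_O O$ on $\PP^1_{\F_q}$ giving the branch locus, (ii) a ramification jump $e_O \in \Z_{\ge 1}$ with $\gcd(e_O,p) = 1$ at each closed point $O \in D$, and (iii) a normalized polar part at $O$, of which there are exactly $(q^{\deg O} - 1)\, q^{\deg O (e_O - \lfloor e_O/p\rfloor - 1)}$. Riemann--Hurwitz gives $\sum_O \deg(O)(e_O + 1) = 2g/(p-1) + 2$, and Deuring--Shafarevich applied to $\mcC \to \PP^1$ gives $r(\mcC) = (p-1)(\deg D - 1)$. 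Combining these, the condition $r(\mcC) = g$ forces $e_O = 1$ for every $O$, so $\Fs_0$ is precisely the sub-family where all ramification jumps equal one.

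Introducing a formal variable $u$ that tracks $S(g) := 2g/(p-1) + 2$, the parametrization collapses the grading-weighted counts to Euler products
\[
F(u) \;=\; \sum_{\mcC \in \Fs} u^{S(g(\mcC))} \;=\; \prod_{O} F_O(u), \qquad F^{\mathrm{ord}}(u) \;=\; \prod_O \bigl(1 + (q^{\deg O} - 1)\, u^{2 \deg O}\bigr),
\]
where $F_O(u) = 1 + \sum_{e}(q^{\deg O} - 1) q^{\deg O (e - \lfloor e/p\rfloor - 1)} u^{\deg O (e+1)}$ sums over all admissible local jumps. Using $|\{O : \deg O = d\}| \sim q^d/d$, each jump $e$ contributes to $\log F(u)$ a summand asymptotic to $-\log\bigl(1 - q^{e - \lfloor e/p\rfloor + 1}\, u^{e+1}\bigr)$. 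The exponent ratio $(e - \lfloor e/p\rfloor + 1)/(e+1)$ equals $1$ precisely for $e \in \{1, 2, \ldots, p - 1\}$ and is strictly less than $1$ for $e \ge p+1$, so the dominant singularity of $F$ sits at $u = 1/q$ and is contributed by exactly the $p-1$ small jumps; $F^{\mathrm{ord}}$ receives only the $e = 1$ contribution.

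Consequently $F^{\mathrm{ord}}$ has a simple pole at $u = 1/q$ while $F$ has a pole of order $p-1$ there. For $p = 2$ the two generating functions have matching singular structure, and a direct Euler-product computation (giving, e.g., $F(u) = (1 - u^2)/(1 - q^2 u^2)$) together with a residue comparison yields $N_0(g)/N_{\mathrm{tot}}(g) \to R > 0$; a geometric-series argument, using that both sums are dominated by their summand at $g = \lfloor \log_q X \rfloor$, transfers this positivity to $P(\Fs, 0)$. For $p \ge 3$, standard singularity analysis gives $N_{\mathrm{tot}}(g) \sim C\,g^{p-2}\,q^{S(g)}$ and $N_0(g) \sim C_0\,q^{S(g)}$, so $N_0(g)/N_{\mathrm{tot}}(g) = O(g^{-(p-2)}) \to 0$ and hence $P(\Fs, 0) = 0$. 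The main technical obstacle is the singularity analysis itself: verifying that the infinite tail of jumps $e \ge p+1$ does not sharpen the pole at $u = 1/q$, and confirming its order as exactly $p - 1$ so that the asymptotic ratio really degenerates in the claimed way.
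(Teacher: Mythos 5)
Your proposal follows essentially the same route as the paper: the same parametrization by branch divisor, prime-to-$p$ jumps and normalized polar parts with the same local counts, the same Euler product (your variable $u$ is the paper's $q^{-s}$), the same identification of a pole of order $p-1$ for the full family versus a simple pole for the ordinary subfamily at $u = 1/q$, and the same extraction of asymptotics (the paper applies a Tauberian theorem exactly where you invoke singularity analysis, and cites the simple-pole criterion for ordinarity where you rederive it via Deuring--Shafarevich). The only slip is cosmetic: for $p=2$ the global generating function is $(1-qu^{2})/(1-q^{2}u^{2})$ rather than $(1-u^{2})/(1-q^{2}u^{2})$, which does not affect the location or order of the pole.
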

	 
	 For $p=2$ we calculate the probability explicitly and give some values for various $q$ in \S 4. For the family of superelliptic curves, we prove:
	 
	 \begin{theorem}[Theorem \ref{Superellmaintheorem}]
	 \label{SEStatement1}
	    The probability that a superelliptic curve of prime degree over a finite field of characteristic 2 is ordinary is zero.
	 \end{theorem}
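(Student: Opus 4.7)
The plan is to identify a dimension-theoretic obstruction to ordinarity arising from the tame $\mu_\ell$-action on the curve and then to conclude by a trivial counting step. Write a superelliptic curve of prime degree in the family as $C: y^\ell = f(x)$, where $\ell$ is an odd prime (forced by $\gcd(\ell,2)=1$, since otherwise the cover is inseparable and falls under the Artin--Schreier regime of Theorem \ref{ASStatement1}) and $f \in \F_q[x]$ is separable of degree $d$. The genus of $C$ is $(\ell-1)(d-1)/2$ if $\gcd(\ell,d)=1$ and $(\ell-1)(d-2)/2$ if $\ell\mid d$.

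The first step is to decompose the space of holomorphic differentials under the $\Z/\ell$-action. After base change to $\bar{\F}_2$ (which contains $\mu_\ell$), one has $H^0(C,\Omega^1) = \bigoplus_{j=1}^{\ell-1} W_j$, where $W_j$ is spanned by differentials $x^i\,dx/y^j$ with $i$ in an explicit range, and a local computation at the branch points and at infinity yields
\[
\dim W_j = \lfloor (jd-1)/\ell \rfloor\ \text{if } \gcd(\ell,d)=1,\qquad \dim W_j = jd/\ell - 1\ \text{if } \ell\mid d.
\]
The Cartier operator is $\Z/\ell$-equivariant and $2^{-1}$-semilinear, so it sends $W_j$ to $W_{2^{-1}j \bmod \ell}$, cycling through each orbit of multiplication by $2$ on $(\Z/\ell)^\times$. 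Because ordinarity is equivalent to injectivity of the Cartier operator, and injectivity of a cycle of semilinear maps between finite-dimensional spaces forces every space in the cycle to have the same dimension, a necessary condition for $C$ to be ordinary is that $\dim W_j$ is constant on every Frobenius orbit in $(\Z/\ell)^\times$.

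The combinatorial crux is that this constancy almost never holds. Indeed $j \mapsto \dim W_j$ is weakly increasing, its fibers are intervals of integer length at most $\lceil (\ell-1)/d\rceil$, and Frobenius orbits have size $\mathrm{ord}_\ell(2) \geq 2$. Consequently, once $d \geq \ell$ each fiber contains at most one integer, no Frobenius orbit fits inside a single fiber, and $C$ is non-ordinary for every $f$; in the $\ell\mid d$ case, strict monotonicity of $\dim W_j$ gives the same conclusion immediately. Hence the only potentially-ordinary members of the family have $d < \ell$, so for each prime $\ell$ there are at most $O(q^{\ell-1})$ such curves, all of genus at most $O(\ell^2)$.

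A routine tail estimate then closes the argument: summing over primes $\ell$ compatible with $q^g < X$, which forces $\ell = O(\sqrt{\log_q X})$, gives $N(\Fs,0,X) = q^{O(\sqrt{\log_q X})}$, whereas $N(\Fs,X)$ grows polynomially in $X$ from the contributions with $d\gg \ell$. Therefore $N(\Fs,0,X)/N(\Fs,X) \to 0$ and $P(\Fs,0) = 0$. The main obstacle is the careful derivation of the dimension formulas and the combinatorial lemma on fibers versus Frobenius orbits across both ramification regimes at infinity; once these are in place the conclusion drops out.
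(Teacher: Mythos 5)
Your eigenspace mechanism is the right one --- it is exactly the paper's: the Cartier operator permutes the $\boldsymbol{\mu}_n$-eigenspaces of $H^0(\mcC,\Omega^1)$ according to multiplication by $2^{-1}$ on $(\Z/n\Z)^{\times}$, and ordinarity forces the eigenspace dimensions to be constant on these orbits (this is Elkin's formula $g-a=\sum_i\min(d_i,d_{\sigma(i)})$, Proposition \ref{mainbase}). But there is a genuine gap in how you apply it: you take $f$ to be \emph{separable}. The family in the theorem consists of all $n$-th power free $f$, i.e.\ $f=\prod_{i=1}^{n-1}f_i(x)^i$ with the $f_i$ squarefree and coprime, and the eigenspace dimensions then depend on the whole degree vector $(\deg f_1,\dots,\deg f_{n-1})$ via \eqref{dis}, not on a single integer $d$. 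The ordinarity criterion (Propositions \ref{superellordinary3} and \ref{superellordinary}) is $\deg f_i=\deg f_{n-i}$ for all $i$ (suitably adjusted at $\infty$), and this locus is far from empty: for $n=3$, every $y^3=f_1f_2^2$ with $\deg f_1=\deg f_2$ is ordinary. Consequently your claim that the only potentially ordinary members have bounded degree, and hence that $N(\Fs,0,X)=q^{O(\sqrt{\log_q X})}$, is false: the true count is $N(\Fs,0,X)\asymp X^{2/(n-1)}(\log X)^{(n-3)/2}$, which is polynomially large and differs from the denominator $N(\Fs,X)\asymp X^{2/(n-1)}(\log X)^{n-2}$ only by a power of $\log X$. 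The ``routine tail estimate'' is therefore not available; the theorem is won by comparing logarithmic exponents, not polynomial ones, and the real work is (i) the linear-algebra lemma identifying the ordinary stratum with $e_i=e_{n-i}$ (which cuts the number of free degree parameters from $n-1$ down to $(n-1)/2$), (ii) a Tauberian lower bound for the full family, and (iii) an upper bound for the constrained count using the Bucur et al.\ asymptotics for tuples of coprime squarefree polynomials.

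A secondary point: the theorem is stated for a fixed odd prime $n$ (the degree is part of the data of the family $\Fs$), so there is no sum over primes $\ell$ and no constraint $\ell=O(\sqrt{\log_q X})$; and in any case letting $\ell$ vary would not repair the argument, since the problem above occurs already for each fixed $n$. Within the separable subfamily your combinatorial observation (that $j\mapsto\lfloor jd/\ell\rfloor$ cannot be constant on an orbit of size $\mathrm{ord}_\ell(2)\ge 2$ once $d\ge\ell$) is correct and is a nice special case of the paper's kernel computation, but that subfamily carries essentially none of the ordinary locus.
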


	
	This paper is inspired by a paper of Cais, Ellenberg and Zureick-Brown \cite{CEZB13}, which gives a heuristic for the behaviour of the $p$-divisible group of an abelian variety by proving the distribution for a random (principally quasi-polarized) Dieudonn\'{e} module. They show that the probability that such a module is ordinary  (here the $p$-rank and $a$-number of a Dieudonn\'{e} module $D$ are defined as those of $D/pD$) is
	\begin{equation}
	\label{CEZB}
	    \prod_{i=1}^{\infty} (1+ q^{-i})^{-1}.
	\end{equation}

	Further, they ask if the Dieudonn\'{e} module associated to the Jacobian of a curve behaves like a randomly chosen one, i.e. whether the limit in \eqref{EqnMaincount} equals \eqref{CEZB}. They find, via numerical experiments, that hyperelliptic curves in odd characteristic do not appear to obey their heuristics, while plane curves do. The families considered in this paper are the first known cases whose behavior provably diverges significantly from the heuristics of \cite{CEZB13}.\\
	
	Note that Theorems \ref{ASStatement1} and \ref{SEStatement1}	show that the Artin-Schreier and superelliptic families do not obey the heuristic \eqref{CEZB} and therefore do not behave randomly in the sense of \cite{CEZB13}. We explain this in greater detail in section 3.\\

	We emphasize that theorems \ref{ASStatement1} and \ref{SEStatement1} are both `large $g$-limit' results, in that they study the behavior of curves as $g \rightarrow \infty$, with $q$ fixed. The `large $q$-limit' behavior, i.e. the geometry of families of curves of a fixed genus, is usually incomparable to the large $g$-limit behavior. However, studying the former can provide some insight into it latter. To illustrate our point, we list some results here that show how different the geometry of the Artin-Schreier locus is from that of some other families of curves. It is known that the locus of ordinary curves is a non-empty Zariski open subset of $\mathscr{M}_g$ \cite{Nakajima83}. Thus for a fixed genus $g$, `most' curves of genus $g$ tend to be ordinary. Let $\mathscr{V}_{g,r}$ denote the sublocus of $\bar{\mathscr{M}}_g$ of curves of $p$-rank at most $r$. In \cite{FaberGeer04} Faber and van der Geer prove that $\mathscr{V}_{g,r}$ has codimension $g-r$. A result of Glass and Pries \cite{GlassPries05} states that $\mathscr{V}_{g,r}$ intersects the hyperelliptic locus, $\mathscr{H}_g$, inside $\bar{\mathscr{M}}_g$ in a set of dimension $g-1+r$. Since $\mathscr{H}_g$ has dimension $2g-1$, this implies that the ordinary locus is dense in $\mathscr{H}_g$. We compare this to results about $\mathscr{AS}_g$, the Artin-Schreier locus inside $\mathscr{M}_g$. In \cite{PriesZhu12}, Pries and June Zhu prove that for $p \ge 3$, the codimension of $\mathscr{V}_{g,r} \cap \mathscr{AS}_g$ inside $\mathscr{AS}_g$ is less than $g-r$. This indicates that for $p \ge 3$, the image under the Torelli morphism of $\mathscr{AS}_g$ in $\mathscr{A}_g$ is not in general position with respect to the $p$-rank stratification. Further, from results in \cite{PriesZhu12}, it follows that the ordinary locus intersects only one irreducible component of $\mathscr{AS}_g$. As $g \rightarrow \infty$, the number of components of $\mathscr{AS}_g$ increases except when $p=2$ (in which case $\mathscr{AS}_g$ is $\mathscr{H}_g$). This gives a heuristic reason for why one might expect a statement like Theorem \ref{ASStatement1}. A similar heuristic explains Theorem \ref{SEStatement1} as well, as we elaborate in remark \ref{EndremarkSE}.\\

	 
	 
	 

	 \subsection*{Acknowledgements} The author would like to thank Jordan Ellenberg and Rachel Pries for their invaluable help and support throughout this project, as well as Melanie Wood, Tonghai Yang and Brandon Alberts for their helpful comments. The author was partially supported by the NSF grant no. DMS-1700884.
	 
	 
	\section{Setup and Background}
	
	In this section, we provide the setup and background for each of the families that we will consider.

	\subsection{Artin-Schreier Curves}
	\label{ArtinSchreierBackground}
	
	Let $k$ be a perfect field of characteristic $p>0$. We now recall some facts about Artin-Schreier curves and covers. An Artin-Schreier curve $\mcC$ over $k$ is a smooth $\Z/p\Z$ cover of $\PP^1_k$. Such a curve has an affine model 
	\begin{equation}
	\label{ASmodel}
	    	y^p-y = f(x)
	\end{equation}
	where $f(x) \in k(x)$, and is equipped with a $\Z/p\Z$ action generated by $y \mapsto y+1$. An \emph{Artin-Schreier cover} is an Artin-Schreier curve along with a choice of map $\iota : \Z/p\Z \hookrightarrow \Aut(\mcC)$  and a choice of isomorphism $\mcC/ (\iota(\Z/p\Z)) \cong \PP^1$. This amounts to picking a model of the form \ref{ASmodel}. \\
	
	Let $B \subset \PP^1(\bar{k})$ be the set of poles of $f$. Then, the cover above is ramified precisely at the points in $B$ \cite{Sticht09}. For $\alpha \in B $, let
	\begin{align*}
	    x_{\alpha} = \begin{cases} \frac{1}{x-\alpha} \quad& \alpha \neq \infty\\
	    x \quad& \alpha = \infty \end{cases}
	\end{align*}
	
	Then, using a partial fraction decomposition one can write 
	\begin{equation}
	\label{partialdecomp}
	    	f(x) = \sum_{\alpha \in B} f_{\alpha}(x_{\alpha})
	\end{equation}
    
    where $f_{\alpha} \in \bar{k}[x]$. 
    
     \begin{remark}
     \label{ASRemark}
	We now make a few helpful observations about the partial fraction decomposition above.
	    \begin{enumerate}
	        \item We assume that for $\alpha \neq \infty$, $f_{\alpha}$ has no constant term.
	        \item By a transformation of the form $y \mapsto y+z$, one can assume that in $f_{\alpha}(x)$, the coefficient of $x^{ip}$ is zero for any $0 \le i \le \lfloor d_{\alpha}/p \rfloor$. In particular, we can take $d_{\alpha} \neq 0 \mod p$.
	        \item If $\alpha, \beta \in B$ are Galois conjugate, then $d_{\alpha} = d_{\beta}$. 
	        \item Let $Q$ be an irreducible polynomial in $k[x]$ whose zeroes are ramified in the Artin-Schreier cover under consideration. Then we will denote $d_{Q}$ as the degree of any $f_{\alpha}$, $\alpha$ a zero of $Q$. This is well defined by the above remark.
	    \end{enumerate}
	\end{remark}
    
    By the Riemann-Hurwitz theorem for wildly ramified covers, we know that the genus of such a curve is given by:
	
	\begin{equation}
	\label{genusArtinSchreier}
	g = \left(\frac{p-1}{2} \right) \left(-2 + \sum_{\alpha \in B} (d_{\alpha}+1)\right) = \left(\frac{p-1}{2} \right) \left(-2 + \sum_{Q \text{ irred.} \atop \text{ramified}} \deg(Q)(d_{Q}+1) + (d_{\infty}+1)\right)
	\end{equation}
	
    The following criterion for the ordinarity of an Artin-Schreier curve is well known (\cite{Crew84}, \cite{ElkinPries13}, \cite{Sub75}):

	\begin{proposition}
	    The Artin-Schreier cover $y^p-y = f(x)$ is ordinary if and only if $f$ has only simple poles.
	\end{proposition}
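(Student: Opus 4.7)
The plan is to compute the $p$-rank of $\mcC$ in purely combinatorial terms involving the branch set $B$, compare it with the genus formula \eqref{genusArtinSchreier} already recorded in the excerpt, and read off that equality of the two forces every pole order to be $1$.

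For the $p$-rank, I would invoke the Deuring--Shafarevich formula for the $\Z/p\Z$-cover $\mcC \to \PP^1$. Since $\PP^1$ has $p$-rank $0$ and each branch point $\alpha \in B$ has a unique preimage in $\mcC$ with ramification index $p$, the formula reduces to
$$r(\mcC) - 1 = p(0 - 1) + |B|(p-1),$$
i.e.\ $r(\mcC) = (p-1)(|B|-1)$. This formula is classical and is essentially the content of \cite{Sub75} or \cite{Crew84}; alternatively, one can read it off the Artin--Schreier--Witt description of the $p$-torsion of $\Jac(\mcC)$.

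With the $p$-rank in hand, the ordinarity criterion $r(\mcC) = g$ becomes
$$(p-1)(|B|-1) \;=\; \frac{p-1}{2}\left(-2 + \sum_{\alpha \in B}(d_\alpha + 1)\right),$$
which after dividing by $(p-1)/2$ collapses to the identity $|B| = \sum_{\alpha \in B} d_\alpha$. Since $d_\alpha \geq 1$ for every $\alpha \in B$ (as $\alpha$ is an actual pole of $f_\alpha$), this equality is equivalent to $d_\alpha = 1$ for every $\alpha$, i.e.\ $f$ has only simple poles; conversely, if every pole is simple the chain of equalities reverses to give ordinarity.

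The substance of the argument is entirely in the $p$-rank formula $r(\mcC) = (p-1)(|B|-1)$; everything else is one line of arithmetic with the Riemann--Hurwitz data. I do not expect any serious obstacle, since Deuring--Shafarevich for a $\Z/p\Z$-cover of $\PP^1$ is well established, and its specialization to the Artin--Schreier setting is already implicit in the references the author cites just before the statement.
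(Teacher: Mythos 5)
Your argument is correct: the Deuring--Shafarevich formula gives $r(\mcC)=(p-1)(|B|-1)$, and comparing with the Riemann--Hurwitz genus \eqref{genusArtinSchreier} yields $g-r(\mcC)=\frac{p-1}{2}\sum_{\alpha\in B}(d_\alpha-1)$, which vanishes exactly when every $d_\alpha=1$. The paper does not prove this proposition itself but cites it as well known from \cite{Crew84}, \cite{ElkinPries13} and \cite{Sub75}, and your route via Deuring--Shafarevich is precisely the standard argument contained in those references, so there is nothing to add.
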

	
    This is equivalent to the condition that $d_{\alpha} = 1$ for each $\alpha$ in the partial fraction decomposition \eqref{partialdecomp}. 
    
   Let $\mathcal{S}$ be the set of rational functions $f(x) \in k(x)$ such that the partial fraction decomposition of $f$ satisfies the conditions (1-3) from remark \ref{ASRemark}. For simplicity, we will assume that $\infty \notin B$. This assumption is harmless, as we explain in remark \ref{RemarkArtinSchreier} and makes the computations in \S 3 much cleaner. We now restrict our attention to $k = \F_q$ and define the families for this section as follows:
   
    \begin{itemize}
        \item $\Fs = $ Set of Artin-Schreier covers $y^p-y = f(x)$, where $f(x) \in \mathcal{S}$ has no poles over $\infty \in \PP^1$. 
        \item $\Fs_0 =$ Set of all ordinary Artin-Schreier covers $y^p - y = f(x)$ with $f(x) \in \mathcal{S}$, unramified over $\infty \in \PP^1$.
    \end{itemize}

   \subsubsection{Counting curves versus counting covers}
   \label{subsubsection:AScurvesversuscovers}
   In our proof of the main theorem in \S 3, we calculate the limit from question \ref{mainquestion}  by counting polynomials in the set $\mathcal{S}$ defined above. Note that while this is not exactly the same as counting Artin-Schreier curves \emph{upto isomorphism}, it does not significantly affect the proportion of ordinarity for large enough $p$. In particular, it does not change whether the limit \ref{mainquestion} is non-zero for $p \ge 7$. We explain this below.\\
   
    There is a map
	\begin{equation}
	\label{eqn:ArtinSchreierfibermap}
    \mathcal{S} \rightarrow \bigcup_{g \ge 0} \mathscr{AS}_g (\F_q)
    \end{equation}
    sending $f(x) \in \mathcal{S}$ to the curve with model $y^p-y = f(x)$. Remark \ref{ASRemark} shows that this map is surjective. We will now bound the size of the fibers. For an Artin-Schreier curve $\mcC,$ a choice of model $\mcC_f$ amounts to a choice of homomorphism $\iota : \Z/p\Z \hookrightarrow \Aut(\mcC)$ and a choice of isomorphism $\mcC/\iota(\Z/p\Z) \cong \PP^1$. For $g \ge 2$, Stichtentoth proved (see \cite{St73}, \cite{Stich73}) that $|\Aut(\mcC)| \le 16g(\mcC)^4$. We claim that the number of choices of isomorphism $\mcC/\iota(\Z/p\Z) \cong \PP^1$ is bounded uniformly. 
    
    \begin{proposition}
	\label{ASClaim}
	    Let $\mcC_f$ and $\mcC_g$ be two Artin-Schreier covers with $\phi : \mcC_f \cong \mcC_g$ such that there is a commutative diagram
	     \begin{center}
	        \begin{tikzcd}
	            \mcC_f \arrow[r, "\phi"] \arrow[d] &\mcC_g \arrow[d]\\
	            \PP^1_{\F_q} \arrow[r, "\Tilde{\phi}"] &\PP^1_{\F_q}
	        \end{tikzcd}
	    \end{center}
	    where the vertical maps are quotients by the $\Z/p\Z$ actions. 
	    Then $f(x) = u g(\gamma x)$ for some $u \in \Z/p\Z^{\times}$ and $\gamma \in \PGL_2(\F_q)$.
	\end{proposition}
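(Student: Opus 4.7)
The plan is to convert the geometric isomorphism $\phi$ into an algebraic identity relating $f$ and $g$, via the induced pullback on function fields, and then use the normalization of Remark~\ref{ASRemark} to eliminate the Artin--Schreier ambiguity. Since $\Aut(\PP^1_{\F_q}) = \PGL_2(\F_q)$, the map $\gamma := \tilde\phi$ automatically lies in $\PGL_2(\F_q)$, and on function fields the induced isomorphism $\phi^* \colon \F_q(\mcC_g) \to \F_q(\mcC_f)$ sends $x_g \mapsto \gamma(x)$. Because the two $\Z/p\Z$-actions are cyclic of prime order and $\phi$ conjugates them, there is a unique $u \in (\Z/p\Z)^\times$ such that $\phi \circ \sigma_f = \sigma_g^u \circ \phi$, where $\sigma_f, \sigma_g$ denote the canonical generators $y_\bullet \mapsto y_\bullet + 1$.

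Next, expand $\phi^*(y_g) = \sum_{i=0}^{p-1} a_i(x)\, y_f^i$ inside $\F_q(x)[y_f]/(y_f^p - y_f - f(x))$. The equivariance becomes $\sum_i a_i(x)\,(y_f+1)^i = \sum_i a_i(x)\,y_f^i + u$; equating coefficients of $y_f^j$ for $j$ from $p-2$ down to $1$ gives, modulo already-vanishing higher terms, the single relation $(j+1)\, a_{j+1} = 0$. Since $2 \le j+1 \le p-1$ is invertible in $\F_p$, induction forces $a_i = 0$ for $i \ge 2$, and the $y_f^0$-coefficient then forces $a_1 = u$. Hence $\phi^*(y_g) = u\, y_f + h$ for some $h \in \F_q(x)$. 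Substituting into $y_g^p - y_g = g(x_g)$ and using $u^p = u$ yields
\[
u f(x) + (h^p - h) \;=\; g(\gamma(x)),
\]
which rearranges, setting $H := -u^{-1} h$, to $f(x) = u^{-1}\, g(\gamma(x)) + (H^p - H)$.

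The remaining crux is to show $H^p - H = 0$. The key point is that every Artin--Schreier equivalence class $\{F + (r^p - r) : r \in \F_q(x)\}$ has a unique representative in $\mathcal{S}$ satisfying the normalization of Remark~\ref{ASRemark}. Replacing $u^{-1}\, g(\gamma x)$ by its normalized representative $f'' \in \mathcal{S}$ (and absorbing the correction into $H$), the problem reduces to: if $f, f'' \in \mathcal{S}$ and $f - f'' = H^p - H$ for some $H \in \F_q(x)$, then $H \in \F_p$. At any finite pole $\beta$ of $H$ with principal part $H_\beta(x_\beta)$, the principal part of $H^p - H$ at $\beta$ equals $H_\beta^{(p)}(x_\beta^p) - H_\beta(x_\beta)$, whose top-degree term has degree $p \cdot \deg H_\beta$, divisible by $p$. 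But by condition (2) of Remark~\ref{ASRemark}, neither $f$ nor $f''$ contributes any principal-part term in a degree divisible by $p$; hence the leading coefficient of $H_\beta$ must vanish, and iterating this pole-by-pole descent forces $H_\beta \equiv 0$ at every finite $\beta$. The analogous comparison at $\infty$, together with the hypothesis $\infty \notin B$, eliminates any pole of $H$ at $\infty$ as well, so $H$ is a constant; matching the constant term then yields $H \in \F_p$.

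The main obstacle will be the bookkeeping in the last step: one must check that the normalization of $u^{-1} g(\gamma x)$ can be carried out inside $\F_q(x)$ (not just $\overline{\F_q}(x)$), and deal with the case $\gamma(\infty) \in B_g$, where $g(\gamma x)$ acquires a new pole at $\infty$ which is not present in $B_f$ and must be fully cancelled by the contribution of $H$ at infinity. Once $H^p - H = 0$ is established one gets $f(x) = u^{-1}\, g(\gamma x)$, giving the claimed form of the proposition after relabeling the unit.
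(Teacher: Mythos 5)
Your proof is correct and reaches the conclusion by a genuinely different, more self-contained route than the paper. The paper's argument also starts from the observation that $\tilde\phi$ is induced by some $\gamma \in \PGL_2(\F_q)$, but it then matches the ramification divisors ($\tilde\phi^*D_g = D_f$) and \emph{cites} the classification of Artin--Schreier covers with a fixed branch divisor (Pries--Zhu, Remark 3.9) to obtain $f = u\,g(\gamma x) + \delta^p - \delta$, dispatching the final normalization step in one sentence. You instead re-derive that classification from scratch: expanding $\phi^*(y_g)$ in the basis $1, y_f, \dots, y_f^{p-1}$ and using the equivariance $\phi\circ\sigma_f = \sigma_g^u\circ\phi$ to force $\phi^*(y_g) = u\,y_f + h$ is exactly the standard Artin--Schreier-theoretic proof of the cited fact, and it has the advantage of producing the identity $uf + (h^p - h) = g(\gamma x)$ directly, without a separate argument that $\phi$ matches branch divisors. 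You also make explicit the step the paper leaves implicit, namely that two elements of $\mathcal{S}$ congruent modulo $\wp(\F_q(x))$ (where $\wp(H) = H^p - H$) must coincide: your pole-by-pole observation that the top term of the principal part of $H^p - H$ at any pole of $H$ sits in degree divisible by $p$, which condition (2) of Remark \ref{ASRemark} forbids, together with the comparison at $\infty$, is exactly the right argument. What the paper's approach buys is brevity; what yours buys is independence from the external reference and an honest treatment of the normalization.

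Two small remarks. First, the difficulty you flag at the end about $\gamma(\infty) \in B_g$ cannot actually occur: if it did, $g(\gamma x)$ would have a pole at $\infty$ of order $d_{\gamma(\infty)} \not\equiv 0 \bmod p$, while $uf$ is regular at $\infty$ and $\wp(h)$ has pole order at $\infty$ divisible by $p$ (or none), contradicting your identity $uf + \wp(h) = g(\gamma x)$. Second, both your argument and the paper's genuinely prove the congruence $f \equiv u\,g(\gamma x) \bmod \wp(\F_q(x))$; literal equality as in the statement would require $u\,g(\gamma x)$ to already lie in $\mathcal{S}$, which composition with $\gamma$ does not preserve in general. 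You correctly identify this subtlety (the paper's proof glosses over it), though your closing sentence then asserts the literal equality; the congruence is what the fiber-counting application in \S\ref{subsubsection:AScurvesversuscovers} actually requires, so this costs nothing.
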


	

	\begin{proof}
	 
	   The map $\Tilde{\phi}$ is induced by some $\gamma \in \PGL_2(\F_q)$. Let $D_f$ and $D_g$ denote the ramification divisors of $\mcC_f$ and $\mcC_g$ respectively. By Artin-Schreier theory, these are determined by the poles of $f$ and $g$ respectively. Note that since the curves are defined over $\F_q$, so are their ramification divisors. Since $\phi$ must preserve the ramification invariants (namely, the number of ramified points and the ramification groups at each of these points), we must have that $\Tilde{\phi}^*(D_g) = D_f$. Thus $\mcC_{f\circ \gamma}$ and $\mcC_{g}$ are isomorphic curves with the same ramification divisor. \\
	    
	    Now, two Artin-Schreier covers,
	    $$
	    y^p - y = f_1(x) \quad \text{and } \quad y^p - y = f_2(x)
	    $$
	    with the same genus and ramification divisor are isomorphic if and only if $f_1(x) = u f_2(x) + \delta^p - \delta$ (see, for example \cite{PriesZhu12}, Remark 3.9) with $u \in \Z/p\Z^{\times}$ and $\delta \in \F_q(x)$. Since we have imposed the condition that $f(x), g(x) \in \mathcal{S}$, the proposition follows.
	\end{proof}

  Thus we have that for $g \ge 2$, the map on the genus $g$ part in map (\ref{eqn:ArtinSchreierfibermap}) has fibers of size bounded by $C(q)g^4$, where $C(q)$ is a constant independent of $g$. For notational convenience, let $\mathscr{G} = \cup_{g \ge 0}\mathscr{AS}_g(\F_q)$.\\
   
   Since $|\{ f \in \mathcal{S} : g(\mcC_f) = g \}| \le C(q)g^4 |\{ \mcC \in \mathscr{AS}_g(\F_q) \}|$, therefore the probability
   \begin{align*}
        P(\mathscr{G}, 0) &= \lim_{X \rightarrow \infty} \frac{\mid \{ \mcC \in \mathscr{AS}_g(\F_q) \mid q^g < X, a(\mcC) = 0\} \mid }{\mid \{ \mcC \in \mathscr{AS}_g(\F_q) \mid q^g < X \} \mid }\\
        &\le \lim_{X \rightarrow \infty} C(q) \log_q(X)^4 \frac{\mid \{ f \in \mathcal{S} \mid q^{g(\mcC_f)} < X, a(\mcC_f) = 0 \}\mid }{ \mid \{ f \in \mathcal{S} \mid q^{g(\mcC_f)} < X  \} \mid  }.
   \end{align*}
    
    The counting arguments in Section \ref{subsection:mainresultsartinschreier} will show that for $p\ge 7$, the quantities $P(\mathscr{G},0)$ and $P(\Fs,0)$ are the same and equal to 0. 

	\subsection{Superelliptic curves}
    \label{superellipticbackground}
    Let $k$ be an arbitrary field.  A \emph{superelliptic curve} over $k$ is a curve defined by the affine equation,
    $$
    y^n = f(x)
    $$
    
    where $f(x) \in k[x]$ and $n$ is coprime to the characteristic of $k$. Then, this curve has an action of $\boldsymbol{\mu}_n$ ($n$-th roots of unity) on it, namely the map
    $$
    (x,y) \mapsto (x, \zeta_n y)
    $$
    
    where $\zeta_n$ is a primitive $n$-th root of unity. One can make a transformation to write 
    \begin{equation}
        f(x) = \prod_{i=1}^{n-1} (f_i(x))^i
    \end{equation}
    
    where each $f_i(x)$ is a squarefree polynomial. The quotient $\mcC/\boldsymbol{\mu}_n$ gives a map to $\PP^1$, sending $(x,y) \mapsto x$. We let $N:= \sum_{i=1}^{n-1} i\deg(f_i)$. Then the curve $\mathcal{C}$ is unramified over $\infty \in \PP^1$ if and only if $N \equiv 0 \mod n$. In the case that $N \not\equiv 0 \mod n$, we let $n_{\infty}$ be the smallest positive integer such that $N + n_{\infty} \equiv 0 \mod n$. \\
    
    Now, the map $\mathcal{C} \rightarrow \PP^1$ is ramified at the zeros of $f$ and possibly at $\infty$. The ramification indices at each of the ramified points $\alpha \in \PP^1(\bar{k})$ are given by \cite{Koo91}:
    \begin{displaymath}
        e(\alpha) = \begin{cases} \frac{n}{(n,i)} \qquad &if \; f_i(\alpha) = 0\\
        \frac{n}{(n,n_{\infty})} \qquad &if \; \alpha = \infty
        \end{cases}
    \end{displaymath}
	
	The genus of this curve is given by
	\begin{equation}
	   	g = -n+1 + \frac{1}{2}\sum_{i=1}^{n-1} \deg(f_i) (n-(n,i)) + \frac{1}{2}\epsilon (n -  (n,n_{\infty})),
	\end{equation}

	where $\epsilon$ is 0 if the map $\mcC \rightarrow \PP^1$ is unramified over $\infty$ and 1 otherwise.
	
	\begin{remark}
	    Since the techniques of this paper are based on counting polynomials, it is necessary to separate the case when the map is ramified over $\infty \in \PP^1$, even though that seems unnatural. 
	\end{remark}

	We now specialize to the case where $n$ is an odd prime. Let $B \subset \PP^1(\bar{k})$ be the set of points ramified in the cover $y^n = f(x)$. Let $\mathbf{m} = \mid \! B \! \mid $. If $\epsilon =0$, then $\mathbf{m} = \sum_{i=1}^{n-1} \deg(f_i) $ and if $\epsilon = 1$, then $\mathbf{m} = \sum_{i=1}^{n-1} \deg(f_i) +1. $
	
	In either case, we have,
	\begin{equation}
	   	g = \frac{1}{2}(n-1)(\mathbf{m} -2).
	\end{equation}

	Thus with regard to superelliptic curves, we will be interested in the family $\Fs$ of covers $y^n = f(x)$, where,
	\begin{itemize}
		\item $n$ is a fixed odd prime.
		\item The curve is defined over $\F_q$, where $q$ is a power of $2$. 
		\item $f(x) \in \F_q[x]$ is $n$-th power-free.
	\end{itemize}

	\subsubsection{$a$-number of Superelliptic curves in characteristic 2}
	
	We now give a combinatorial criterion for the ordinarity for superelliptic curves in characteristic 2. The discussion in this section is based on a paper by Elkin \cite{Elkin11}. 
	Let $\mathcal{C}$ be a smooth proper superelliptic curve over $\F_q$, $q$ a power of 2, with affine model: $ y^n = f(x)$, where $n$ is an odd prime. We maintain the same notation as before. The space $H^0(\mathcal{C}, \Omega^1_{\mathcal{C}})$ inherits the action of $\boldsymbol{\mu}_n$ and decomposes into eigenspaces as follows:
	$$
	H^0(\mathcal{C}, \Omega^1_{\mathcal{C}}) = \oplus_{i=1}^{n-1} \mathcal{D}_i.
	$$
	
	A key player in Elkin's work is the Cartier operator, $\Cs$. This is a $\Frob^{-1}$-linear operator on $H^0(\mathcal{C}, \Omega^1_{\mathcal{C}})$, which annihilates exact differentials and preserves logarithmic differentials. It is well known that the $a$-number, $a(\mathcal{C})$, equals $g(\mathcal{C}) - \rank(\Cs)$. To state the result in Elkin's paper, we first describe some notation. Let $d_i = \dim(\mathcal{D}_i)$. Let $\sigma$ be the permutation of $\{1,2, \ldots n-1\}$ defined by
	$$
	p\sigma(i) \equiv i \! \! \mod n.
	$$

	By bounding the rank of the Cartier operator, Elkin proves the following:
	
	\begin{proposition}
		\label{mainbase}
		Let $\mathcal{C}$ be as above. Then:
		$$
		g(\mathcal{C}) - a(\mathcal{C}) = \sum_{i=1}^{n-1} \min(d_i, d_{\sigma(i)})
		$$
		where the $d_i = \dim(\mathcal{D}_i)$ can be computed explicitly from the ramification invariants of the curve and $\sigma$ is the permutation of the set $\{1,2, \cdots n-1 \}$ described above.
	\end{proposition}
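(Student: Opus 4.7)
The strategy is to use the identity $a(\mcC) = g(\mcC) - \rank(\Cs)$ and compute the rank of the Cartier operator one eigenspace at a time via the $\boldsymbol{\mu}_n$-decomposition of $H^0(\mcC, \Omega^1_\mcC)$. The first step is to show that $\Cs$ respects the eigenspace structure in a twisted way, namely $\Cs(\mathcal{D}_i) \subseteq \mathcal{D}_{\sigma(i)}$. Since $\Cs$ is canonical, it commutes with every automorphism of $\mcC$, so for $\omega \in \mathcal{D}_i$ and $\zeta \in \boldsymbol{\mu}_n(\bar{k})$ we have $\zeta^*\Cs(\omega) = \Cs(\zeta^*\omega) = \Cs(\zeta^i \omega)$. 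Using the $\Frob^{-1}$-semilinearity of $\Cs$, the right-hand side equals $\zeta^{i/p}\Cs(\omega)$, placing $\Cs(\omega)$ in the eigenspace indexed by $\sigma(i)$, where $p\sigma(i) \equiv i \bmod n$. Consequently $\Cs$ splits as a direct sum of linear maps $\Cs_i : \mathcal{D}_i \to \mathcal{D}_{\sigma(i)}$, giving $\rank(\Cs) = \sum_{i=1}^{n-1} \rank(\Cs_i)$.

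With this reduction, the claim amounts to showing $\rank(\Cs_i) = \min(d_i, d_{\sigma(i)})$ for each $i$. The upper bound is immediate from the dimensions of source and target. For the lower bound, I would work with an explicit basis of $\mathcal{D}_i$ of the form $x^j y^{-i} \prod_l f_l(x)^{\lfloor il/n \rfloor} dx$, with $j$ ranging over an explicit combinatorial set determined by the degrees $\deg(f_l)$; such a basis is standard (see Koo) and immediately yields the formula for $d_i$ in terms of the ramification invariants, giving the second clause of the proposition. In characteristic $2$, the Cartier operator satisfies $\Cs(x^{2a+1}\omega) = 0$ and $\Cs(x^{2a}\omega) = x^a \Cs(\omega)$, so after using $y^n = f(x)$ to rewrite $y^{-i}$ in suitable form, the computation of $\Cs_i$ on the chosen basis reduces to an explicit combinatorial map between index sets that one must show realizes the maximal possible rank $\min(d_i, d_{\sigma(i)})$.

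The main obstacle is precisely this last verification: showing that $\Cs_i$ does not drop rank due to a hidden linear dependence among the images of the basis differentials. A reasonable way to handle this is to order the basis by the exponent of $x$ and argue that the matrix of $\Cs_i$ in the chosen bases has a triangular block structure with nonvanishing diagonal entries (coming from the highest-order terms of the $f_l$), so the restricted operator must have full rank. This is the technical core of Elkin's analysis. Once the equality $\rank(\Cs_i) = \min(d_i, d_{\sigma(i)})$ is in hand, summing over $i$ and applying $a(\mcC) = g(\mcC) - \rank(\Cs)$ yields the stated formula.
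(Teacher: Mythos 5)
The paper does not actually prove this proposition: it is quoted from Elkin \cite{Elkin11} with no argument given, so there is nothing in-paper to compare your proof against. Judged on its own terms, your reduction is correct as far as it goes. The identity $a(\mcC) = g(\mcC) - \rank(\Cs)$, the semilinearity computation showing $\Cs(\mathcal{D}_i) \subseteq \mathcal{D}_{\sigma(i)}$, the resulting splitting $\rank(\Cs) = \sum_i \rank(\Cs_i)$, and the trivial bound $\rank(\Cs_i) \le \min(d_i, d_{\sigma(i)})$ together prove the inequality $g - a \le \sum_i \min(d_i, d_{\sigma(i)})$. That inequality is in fact the only part of the proposition the rest of the paper consumes: it yields ``ordinary implies $d_i = d_{\sigma(i)}$ for all $i$,'' which is what drives the upper bound on $N^*(\Fs,0,X)$.

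The genuine gap is the reverse inequality, i.e.\ the claim that each $\Cs_i$ attains the maximal rank $\min(d_i, d_{\sigma(i)})$. You defer this to an asserted ``triangular block structure with nonvanishing diagonal entries,'' but that is exactly the step that cannot be waved at. After rewriting $y^{-i} = y^{-p\sigma(i)}f(x)^{(p\sigma(i)-i)/n}$, the matrix of $\Cs_i$ is obtained by extracting $p$-th roots of the coefficients of $x^j f(x)^{(p\sigma(i)-i)/n}\prod_l f_l(x)^{\lfloor il/n\rfloor}$ lying in one residue class of exponents mod $p$; its entries therefore depend on the actual coefficients of the $f_l$, not only on their degrees, and a leading-term argument only controls roughly half the columns (those $j$ for which the top exponent falls in the surviving congruence class). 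Moreover, the analogous full-rank statement is false for general $(p,n)$ --- for $n=2$ and $p$ odd it would assert that every hyperelliptic curve is ordinary --- so any correct proof must use $p=2$ and $n$ odd in an essential way, and your sketch never identifies where that enters. As written, the proposal establishes ``$\le$'' but not ``$=$'', and hence neither the stated proposition nor the ``if'' directions of Propositions \ref{superellordinary3} and \ref{superellordinary}.
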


	For any rational number $r$, let $\langle r \rangle = r - \lfloor r \rfloor$. Elkin proves that the $d_i$'s are given by the following formula:
	\begin{equation}
	\label{dis}
	   d_i = \sum_{j=1}^{n-1} \deg(f_j) \left\langle \frac{ij}{n} \right\rangle + \left\langle \frac{in_{\infty}}{n} \right\rangle  -1 
	\end{equation}

	Recall that the ordinarity of an abelian variety is equivalent to the condition that its $a$-number is 0. Proposition \ref{mainbase} tells us that $a(\mathcal{C}) = 0$ implies that $g(\mathcal{C}) = \sum_{i=1}^{n-1} \min(d_i, d_{\sigma(i)})$. We now give a condition for ordinarity in terms of the degrees of $f_i$. We will treat the case $n=3$ separately from the case of a general odd prime.
	
	\subsubsection{The case $n=3$:} In this subsection, we consider curves of the form $\mathcal{C}: y^3 = f(x)$. The equation for the genus simplifies to:
	$$
	g = \mathbf{m} - 2.
	$$
	
	\begin{proposition}
	\label{superellordinary3}
	    A curve of the form $y^3 = f_1 f_2^2$, with $f_1, f_2$ squarefree is ordinary if and only if one of the following is true:
	    \begin{enumerate}
	        \item $n_{\infty}=0$ and $\deg(f_1) = \deg(f_2)$, or
	        \item $n_{\infty} = i$ for some $i \in \{1,2\}$ and $\deg(f_i)+1 = \deg(f_{3-i})$
	    \end{enumerate}
	\end{proposition}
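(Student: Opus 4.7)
The plan is to specialize Proposition~\ref{mainbase} to $n=3$ and then carry out a short combinatorial check. For $n=3$ the permutation $\sigma$ defined by $2\sigma(i) \equiv i \pmod{3}$ is the swap on $\{1,2\}$, namely $\sigma(1)=2$ and $\sigma(2)=1$. Hence the identity of Proposition~\ref{mainbase} collapses to
\[
g(\mcC) - a(\mcC) \;=\; 2\min(d_1,d_2).
\]
Since $H^0(\mcC,\Omega^1_{\mcC}) = \mathcal{D}_1 \oplus \mathcal{D}_2$ (the $\boldsymbol{\mu}_3$-invariant summand is the pullback from $\PP^1$ and hence zero), one also has $d_1 + d_2 = g(\mcC)$. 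Thus $\min(d_1,d_2) \le g(\mcC)/2$ with equality if and only if $d_1 = d_2$, and so $a(\mcC) = 0$ is equivalent to the symmetric condition $d_1 = d_2$.

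The next step is to reduce $d_1 = d_2$ to an arithmetic relation among $\deg(f_1)$, $\deg(f_2)$, and $n_\infty$. I would insert $n=3$ into \eqref{dis} and simplify using $\langle 1/3\rangle = 1/3$, $\langle 2/3\rangle = 2/3$, and $\langle 4/3\rangle = 1/3$. A direct subtraction then expresses $d_1 - d_2$ as a linear function of $\deg(f_2) - \deg(f_1)$, plus an additive correction $\langle n_\infty/3 \rangle - \langle 2n_\infty/3 \rangle$ coming from the point at infinity.

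The final step is a case analysis on $n_\infty \in \{0,1,2\}$. When $n_\infty = 0$ the correction vanishes, so $d_1 = d_2$ forces $\deg(f_1) = \deg(f_2)$, matching~(1). When $n_\infty = 1$ the correction is $-1/3$, forcing $\deg(f_2) = \deg(f_1)+1$, which is~(2) with $i=1$. When $n_\infty = 2$ the correction is $+1/3$, forcing $\deg(f_1) = \deg(f_2)+1$, which is~(2) with $i=2$.

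There is no serious obstacle here; the crux is the setup. The only substantive observation is that for $n=3$ the permutation $\sigma$ is an involution, so Proposition~\ref{mainbase} together with $d_1 + d_2 = g$ reduces ordinarity to the single symmetric condition $d_1 = d_2$. Everything else is elementary arithmetic with the fractional parts $\langle \cdot \rangle$ occurring in \eqref{dis}.
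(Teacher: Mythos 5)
Your proof is correct and takes essentially the same route as the paper: specialize Proposition~\ref{mainbase} with $\sigma=(1\;2)$, note that $d_1+d_2=g$ reduces ordinarity to the single condition $d_1=d_2$, and read off the degree conditions from \eqref{dis} by cases on $n_\infty$. The paper's version is terser (it writes out only the $n_\infty=0$ case and handles the others by shifting $\deg(f_i)$ by $1$), but the substance is identical.
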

	
	\begin{proof}
	    Note that $\sigma = (1 \; 2)$. So, $g = 2\min(d_1,d_2)$, which in turn implies $g = 2d_1$ or $g(\mathcal{C}) = 2d_2$. We prove case $(1)$ here. The other cases follow by a similar calculation.\\
	    
	    In this case,
	    $$
    	d_1 = \frac{1}{3} \deg(f_1) + \frac{2}{3} \deg(f_2) -1
    	$$
	
    	and
    	$$
    	d_2 = \frac{2}{3} \deg(f_1) + \frac{1}{3} \deg(f_2) -1.
    	$$
	
     Therefore, $\deg(f_1) = \deg(f_2)$. For case $(2)$, we just replace $\deg(f_i)$ by $\deg(f_i)+1$ in the expression for each $d_j$.
	\end{proof}
	
    \subsubsection{The case of a general odd prime}
	
	\begin{proposition}
	    \label{superellordinary}
	    A curve defined by $y^n = \prod_{i=1}^{n-1} (f_i(x))^i$ as in section \ref{superellipticbackground} with $n$ an odd prime, is ordinary if and only if one of the following is true:
	    \begin{enumerate}
	        \item $n_{\infty} = 0$ and $\deg(f_i) = \deg(f_{n-i})$, or
	        \item $n_{\infty} = i$ for some $i \in \{1, 2 \ldots n-1 \}$, and $\deg(f_i) +1 = \deg(f_{n-i})$, and for $j \neq i, n-i$, $\deg(f_j) = \deg(f_{n-j})$.
	    \end{enumerate}
	\end{proposition}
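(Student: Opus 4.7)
The strategy mirrors the proof of Proposition \ref{superellordinary3}: use Proposition \ref{mainbase} to reduce ordinarity to a linear condition on the $d_i$, then invert formula (\ref{dis}) to turn it into a condition on the $\deg(f_j)$. First I would note that $H^0(\mathcal{C}, \Omega^1_\mathcal{C}) = \bigoplus_{i=1}^{n-1} \mathcal{D}_i$ yields $g = \sum_{i=1}^{n-1} d_i$. Plugging this into Proposition \ref{mainbase} and using $\min(d_i, d_{\sigma(i)}) \leq d_i$, ordinarity forces equality in every term, so $d_i \leq d_{\sigma(i)}$ for all $i$; since $\sigma$ is a permutation this upgrades to $d_i = d_{\sigma(i)}$ for every $i$. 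The task is therefore to show that this condition on the $d_i$ is equivalent to the stated symmetry of the $\deg(f_j)$.

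To treat both cases uniformly I would set $\tilde d_j := \deg(f_j) + \delta_{j, n_\infty}$ (with the convention $\delta_{j,0} := 0$, so $\tilde d_j = \deg(f_j)$ in Case (1)), so that formula (\ref{dis}) simplifies to $d_i = \sum_{j=1}^{n-1} \tilde d_j \langle ij/n \rangle - 1$. Both cases of the conclusion then merge into the single assertion $\tilde d_j = \tilde d_{n-j}$. The forward implication is clean: substituting $j \mapsto n-j$ and using $\langle i(n-j)/n \rangle = 1 - \langle ij/n \rangle$ (valid because $n$ is prime and $i,j \in \{1,\ldots,n-1\}$), the assumed symmetry yields $d_i = \sum_j \tilde d_j - d_i - 2$, so $d_i$ is a constant independent of $i$. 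In particular $d_i = d_{\sigma(i)}$ for all $i$, and $\mathcal{C}$ is ordinary.

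For the converse, I would use $\sigma(i) \equiv 2^{-1} i \pmod{n}$ together with the substitution $\ell \equiv 2^{-1} j \pmod{n}$ to rewrite $d_i = d_{\sigma(i)}$ as $\sum_j (\tilde d_j - \tilde d_{2j \bmod n}) \langle ij/n \rangle = 0$ for every $i \in (\Z/n)^*$. The matrix $M_{ij} := \langle ij/n \rangle$ is $(\Z/n)^*$-equivariant under multiplication, so it is diagonal in the character basis: a direct computation shows it acts on the $\chi$-isotypic line by $\tfrac{1}{n}\sum_{k=1}^{n-1} k\,\chi(k)$, which vanishes exactly when $\chi$ is a nontrivial even character of $(\Z/n)^*$. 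Hence $\ker M$ is the space of even mean-zero functions, forcing the telescoping difference $\tilde d_j - \tilde d_{2j \bmod n}$ to be an even function of $j$.

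The remaining step is to extract $\tilde d_j = \tilde d_{n-j}$ from this telescoping relation, and I expect it to be the main obstacle. When $-1 \in \langle 2 \rangle \subset (\Z/n)^*$ (as happens for $n=3$ and $n=5$), any $\langle 2 \rangle$-invariant odd function automatically vanishes and the conclusion follows at once. For general $n$ the orbit structure of $\langle 2 \rangle$ can support nonzero odd invariants, so one must also invoke the integrality constraint $N + n_\infty \equiv 0 \pmod{n}$ and the non-negativity $\deg(f_j) \geq 0$; the careful orbit-by-orbit bookkeeping needed to rule out nonzero odd solutions is where the real work will lie.
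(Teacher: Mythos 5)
Your backward implication is correct, and the unification of the two cases via $\tilde d_j = \deg(f_j) + \delta_{j,n_\infty}$ is a genuine improvement on the paper, which only writes out case (1). Your reduction of ordinarity to $d_i = d_{\sigma(i)}$ for all $i$ (i.e.\ constancy of $d$ on the orbits of multiplication by $2^{-1}$, which are the cosets of $\langle 2\rangle$ in $(\Z/n\Z)^{\times}$) is also exactly right, and your identification of the relevant kernel with the even mean-zero functions agrees with the paper's explicit basis $x^{(k)}$ for $\Ker(A)$. (One small correction: $M$ is not diagonal in the character basis --- it sends the $\chi$-line to the $\bar\chi$-line with scalar $\tfrac1n\sum_k k\chi(k)$ --- but since that scalar vanishes precisely for the nontrivial even $\chi$, your description of the kernel stands.) The point where you stop is exactly the point where the paper's proof goes wrong: the paper asserts that $\sum_i d_i = \sum_i\min(d_i,d_{\sigma(i)})$ ``automatically implies $d_i = d_j$ for all $i,j$,'' which is only valid when $\sigma$ is transitive; in general one only gets constancy on $\langle 2\rangle$-cosets, which is your telescoping relation.

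The remaining step cannot be carried out, because the statement itself fails once $-1\notin\langle 2\rangle\subseteq(\Z/n\Z)^{\times}$; the integrality, positivity, and $N+n_\infty\equiv 0$ constraints you hoped to invoke do not rule out the odd $\langle 2\rangle$-invariant solutions. Smallest case: $n=7$, where $\langle 2\rangle = \{1,2,4\}$ has index $2$ and $-1 = 6\notin\langle 2\rangle$. Take $f_1,f_2,f_4$ distinct monic linear polynomials and $f_3=f_5=f_6=1$. Then $N = 1+2+4 = 7\equiv 0\bmod 7$, so $n_\infty = 0$, $\mathbf{m} = 3$, $g=3$; formula \eqref{dis} gives $(d_1,\dots,d_6) = (0,0,1,0,1,1)$, which is constant on the $\sigma$-orbits $\{1,2,4\}$ and $\{3,5,6\}$, so $\sum_i\min(d_i,d_{\sigma(i)}) = 3 = g$ and the curve is ordinary by Proposition \ref{mainbase}; yet $\deg(f_1)=1\neq 0=\deg(f_6)$. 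So the correct criterion is the weaker one you actually derived (namely $v_j := \tilde d_j - \tilde d_{n-j}$ is an odd, $\langle 2\rangle$-invariant function, rather than $v\equiv 0$); your argument is complete precisely when $-1\in\langle 2\rangle$ (in particular for $n=3,5$), and for the remaining primes it is the proposition --- and the paper's own proof --- that needs repair, not your strategy. (The corrected criterion still confines the $(e_1,\dots,e_{n-1})$ to a proper linear subspace, so Theorem \ref{Superellmaintheorem} should survive, with a larger power of $\log X$ in Proposition \ref{superellprop0}.)
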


	\begin{proof}
	    As before, we only prove case $(1)$ and the other case follows from a modified, but similar calculation. The condition for ordinarity gives: $\sum_i d_i = \sum_i \min(d_i, d_{\sigma(i)})$. This automatically implies that $d_i = d_j$ for all $1 \le i,j \le n$. Since we are considering the case where $n_{\infty} = 0$, 
	    $$
	    d_i = \sum_{j=1}^{n-1}  \deg(f_j)\left\langle \frac{ij}{n} \right\rangle -1 .
	    $$
	    Define the matrix $A$, with $A_{ij} = \left\langle \frac{ij}{n} \right\rangle$. Thus, the degrees of $f_i$'s must satisfy:
	    \begin{equation}
	        A \begin{pmatrix} x_1 \\ x_2 \\ \vdots \\ x_{n-1}
	        \end{pmatrix}
	        = \begin{pmatrix} d+1\\ d+1 \\ \vdots \\ d+1 \end{pmatrix}
	    \end{equation}

	    for some $d \ge 0$. Let $V$ denote the space of $n-1 \times 1$ vectors whose coordinates are all equal. We are interested in (the integral points of) the space of $x = (x_1, x_2 \ldots x_{n-1})^T$ such that $Ax \in V$.
	    
	    
	    \begin{lemma}
	    The space $\{x\in \Z^{n-1} \mid Ax \in V  \}$ consists of vectors $x$ for which $x_k = x_{n-k}$ for all $k=1, 2 \ldots n-1$. 
	    \end{lemma}
	
	    \begin{prooflemma}
        
        We prove this lemma by constructing an explicit basis for the kernel of $A$, $\Ker(A)$. Let $x^{(k)}$ denote the $n-1 \times 1$ vector which has $1$'s in the $k$th and $n-k$th positions and $-1$'s in the $\frac{n-1}{2}$th and $\frac{n+1}{2}$th positions. We claim that $\{x^{(k)} \mid k = 1, 2, \ldots \frac{n-3}{2} \}$ is a basis for $\Ker(A)$.
        
	    \begin{align*}
	        (Ax^{(k)})_i &= \left(\frac{ik}{n} - \leftfloor \frac{ik}{n}\rightfloor \right) + \left(\frac{i(n-k)}{n} - \leftfloor\frac{i(n-k)}{n}\rightfloor \right) - \left(\frac{i(n-1)}{2n} - \leftfloor \frac{i(n-1)}{2n}\right] \rightfloor \\ &- \left(\frac{i(n+1)}{2n} - \leftfloor\frac{i(n+1)}{2n}\rightfloor \right)\\
	        &= \leftfloor \frac{i(n-1)}{2n}\rightfloor + \leftfloor \frac{i(n+1)}{2n}\rightfloor - \leftfloor \frac{ik}{n}\rightfloor - \leftfloor \frac{i(n-k)}{n}\rightfloor \\
	        &=0.
	    \end{align*}
	    
	    Thus, it only remains to prove that $A$ has rank at least $\frac{n+1}{2}$. Now, $nA$ can be row reduced such that the top left $\frac{n+1}{2} \times \frac{n+1}{2}$ submatrix looks like:
	    \begin{align*}
	        \begin{pmatrix}
	            1 & 2 & 3 &\ldots &\frac{n-1}{2} & \frac{n+1}{2} \\
	            0 & 0 & 0  &\ldots & 0 & * \\
	            0 & 0 & 0 & \ldots & * & *\\
	            \vdots \\
	            0 & 0 & * & \ldots & * & * \\
	            0 & * & * & \ldots &* & *\\
	        \end{pmatrix}
	    \end{align*}
	    
	    where each of the entries immediately below the anti-diagonal is necessarily non zero. Such a matrix has non-zero determinant.

	    Thus, any element in the $\Ker(A)$ looks like:
	    \begin{displaymath}
	        (x_1, x_2, \ldots -\sum_{i=1}^{\frac{n-3}{2}} x_i,-\sum_{i=1}^{\frac{n-3}{2}} x_i, \ldots x_{2}, x_{1})^T
	    \end{displaymath}
	    
	    This proves the lemma and hence the proposition.
	    
	    \end{prooflemma}

	\end{proof}
    
    \begin{remark}
    \label{ramifiedtodegreei}
        Perhaps a more natural way to interpret Propositions \ref{superellordinary3} and \ref{superellordinary} is to say that for a curve $y^n = f(x)$ ($n$ prime) has ordinary Jacobian if and only if the same number of points are ramified to degree $i$ and $n-i$ for any $i \in \{1, 2 \ldots n-1 \}$. Here we say that a point $P$ is `ramified to degree $i$' if the curve locally looks like $y^n = u x_P^i$, where $x_P$ is a uniformizer at $P$ and $u$ is  unit. 
    \end{remark}

	\subsubsection{Aside on counting curves versus counting covers} One might wonder, as in the Artin-Schreier case in \S \ref{ArtinSchreierBackground}, what the difference is between counting superelliptic curves and covers of the form $y^n = f(x)$. We choose to restrict our attention to \emph{covers}, i.e. to \emph{equations} of the form $y^n = f(x)$ with $f(x) \in \F_q[x]$ $n$-th power-free, and make the claim that this does not significantly affect our results.\\

	We first introduce some notation for this section alone. For any $u \in \Z/n\Z ^{\times}$, let $[u]$ be the map that takes $\prod_i (f_i(x))^i$ to $\prod_i (f_i(x))^{(ui \! \mod n)}$. By a straightforward sequence of transformations, one can see that if $f_i$ is squarefree for each $i$ the two curves,
	$$
	y^n = \prod_i (f_i(x))^i \qquad \text{ and } \qquad y^n = \prod_i (f_i(x))^{(ui \! \mod n)}
	$$
	
	are indeed isomorphic. By abuse of notation, we also call this isomorphism of curves $[u]$. We claim that up to an automorphism of $\PP^1_{\F_q}$, the only isomorphisms between superelliptic covers are of the form $[u]$, with $u \in \Z/n\Z$. This is a standard Kummer theory argument, whose proof we recall here. Let $\zeta_n$ be an $n$-th root of unity that acts as an automorphism of the curve sending $(x,y) \mapsto (x, \zeta_n y)$.
	
	\begin{claim}
	    For $n$ an odd prime, let $f(x) = \prod_{i=1}^{n-1} (f_i(x))^i$ and $g(x) = \prod_{i=1}^{n-1} (g_i(x))^i$ be two monic $n$-th power-free polynomials in $\F_q[x]$ such that:
	    \begin{itemize}
	        \item For each $i$, $f_i(x)$ and $g_i(x)$ are squarefree,
	        \item $\Div_0(f) = \Div_0(g)$.
	    \end{itemize}
	    
	    Suppose that $\mcC_f: y^n = f(x)$ and $\mcC_g: y^n = g(x)$ are isomorphic as curves via an isomorphism $\phi$. Then there is a $u \in \Z/n\Z^{\times}$ such that $\phi = \zeta_n \circ [u]$.
	\end{claim}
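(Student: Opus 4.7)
The plan is to apply Kummer theory on function fields. After using an automorphism of $\PP^1_{\F_q}$ to normalize, I may assume that $\phi$ covers the identity on the base $\PP^1$, so that $\phi$ induces an isomorphism of function fields $\phi^* : \F_q(\mcC_g) \to \F_q(\mcC_f)$ fixing $\F_q(x)$. To apply Kummer theory cleanly I would base change to $k = \F_q(\zeta_n)$ so that $\mu_n \subset k$; the extensions $k(x)(y_f)$ and $k(x)(y_g)$ of $k(x)$ are then genuine Kummer extensions given by $y_f^n = f(x)$ and $y_g^n = g(x)$. By the classification of Kummer extensions of $k(x)$ there exist $c \in \mu_n(k)$, $u \in (\Z/n\Z)^\times$, and $h \in k(x)^\times$ with $\phi^*(y_g) = c \cdot y_f^u \cdot h(x)$; raising both sides to the $n$-th power gives the key identity
\[
g(x) = f(x)^u \cdot h(x)^n.
\]

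I would then analyze this identity using $f = \prod_{i=1}^{n-1} f_i^i$. Writing $iu = n q_i + r_i$ with $0 \le r_i < n$, we have $f^u = [u](f) \cdot \bigl(\prod_i f_i^{q_i}\bigr)^n$, where $[u](f) = \prod_i f_i^{(iu) \bmod n}$; absorbing the $n$-th power into $h$, the identity becomes $g = [u](f) \cdot H^n$ for some $H \in k(x)^\times$. Since both $g$ and $[u](f)$ are $n$-th power free polynomials in $k[x]$ (the $f_i$ being squarefree and pairwise coprime, with $(iu) \bmod n < n$), comparing $\pi$-adic valuations at each irreducible $\pi \in k[x]$ forces $H$ to be a constant in $k^\times$, and monicity of $f$ and $g$ then gives $g = [u](f)$ exactly. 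The hypothesis $\Div_0(f) = \Div_0(g)$ is consistent with this because the map $[u]$ preserves the support of the zero divisor. Matching the Kummer description to the explicit formula for the named map $[u] : \mcC_f \to \mcC_{[u](f)}$, namely $(x, y_f) \mapsto (x,\, y_f^u/\prod_i f_i^{\lfloor iu/n \rfloor})$, we see that $\phi$ agrees with $[u]$ up to multiplication by a constant in $\mu_n$ acting on the $y$-coordinate; since this $\mu_n$-action on $\mcC_g$ is the deck transformation group generated by $\zeta_n$, we conclude $\phi = \zeta_n^j \circ [u]$ for some $j$, which is the claimed form.

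The step I expect to be the main obstacle is the Galois descent from $k = \F_q(\zeta_n)$ back to $\F_q$: one must check that when $\phi$ is defined over $\F_q$, the parameters $u$ and $c$ extracted from the Kummer calculation over $k$ can be chosen in a Frobenius-equivariant way. This reduces to the observation that $u$ is a scalar in $(\Z/n\Z)^\times$ and hence automatically Frobenius-fixed, while the $\mu_n$-ambiguity in $c$ is harmless because the statement permits an arbitrary power of $\zeta_n$ in the conclusion.
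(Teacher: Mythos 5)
Your proposal is correct and follows essentially the same route as the paper: adjoin $\zeta_n$, invoke Kummer theory to conclude the two extensions correspond to classes $[f]$ and $[f]^u$ for some $u\in(\Z/n\Z)^{\times}$, and identify the resulting isomorphism with $[u]$ up to a deck transformation. In fact you supply more detail than the paper does (the valuation argument forcing $g=[u](f)$ exactly, and the descent remark), so no gap to report.
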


	\begin{proof}
	     Let $K = \F_q(\PP^1)$ and $L = \F_q(\mcC_f) \cong \F_q(\mcC_g)$. Note $L(\zeta_n)/K(\zeta_n) $ is a Galois extension. Let $\varphi : \Gal(\overline{K(\zeta_n)}/K(\zeta_n)) \rightarrow \boldsymbol{\mu}_n$ be the homomorphism corresponding to $L(\zeta_n)$. Any other field $L^{'}$ that is isomorphic to $L(\zeta_n)$ corresponds to the homomorphism $\varphi^u$ for some $u \in \Z/n\Z^{\times}$. Therefore, if $[\alpha] \in K(\zeta_n)^{\times}/(K(\zeta_n)^{\times})^n$ is the class corresponding to $\varphi$ via the Kummer map, then there is a $u \in \Z/n\Z^{\times}$ such that the isomorphism $\F_q(\zeta_n)(\mcC_f) \cong \F_q(\zeta_n)(\mcC_g)$ corresponds to the class $[\alpha^u]$. This proves the claim.
	\end{proof}

	    
	    
	    
    For $n$ an odd prime, let $\mathcal{T}_n$ denote the set of $n$-th power free polynomials in $\F_q[x]$. Let $\mathscr{SE}_{g,n}(\F_q)$ denote the set of superelliptic curves of degree $n$ and genus $g$ over $\F_q$. Then the above claim shows that the fibers of the map,
    \begin{align*}
        \mathcal{T}_n &\rightarrow \bigcup_{g \ge 0} \mathscr{SE}_{g,n}(\F_q)\\
        f(x) & \mapsto (y^n = f(x))
    \end{align*}
    
    have size bounded by $n\mid \! \Z/n\Z^{\times} \! \mid \mid \PGL_2(\F_q) \mid$. 
	In particular, the probability
     $$
     \lim_{X \rightarrow \infty} \frac{\mid \{ \mcC \in \mathscr{SE}_{g,n}(\F_q) \mid q^g < X, a(\mcC) = 0\} \mid }{\mid \{ \mcC \in \mathscr{SE}_g(\F_q) \mid q^g < X \} \mid }
     $$
    
     exists and is zero if and only if the quantity
    \begin{align}
     \label{replacementqty}
         \lim_{X \rightarrow \infty} \frac{\mid \{ f \in \mathcal{T}_n \mid q^{g(\mcC_f)} < X, a(\mcC_f) = 0 \}\mid }{ \mid \{ f \in \mathcal{T}_n \mid q^{g(\mcC_f)} < X  \} \mid  }
     \end{align}
    
     exists and is zero. This proves that understanding the proportion of ordinarity in the set of superelliptic curves of degree $n$ is the same as understanding it for the family of superelliptic curves of a fixed degree over $\F_q$.


    	\section{Main Results}
	\label{section:mainresults}
	In this section we describe the main results obtained from counting each of the families described above. Our main tool will be the following Tauberian theorem:
	
	\begin{theorem}
		[See \cite{TschinLoir01}, Appendix A]
		\label{Tauberian}
		Let $\{\lambda_n\}_{n \in \Z_{>0}}$ be strictly increasing sequence of positive integers. Let $f$ be the Dirichlet series,
		$$
		f(s) = \sum_{n=1}^{\infty} c_n \lambda_n^{-s}.
		$$
		Further, assume the following:
		\begin{enumerate}
			\item $f(s)$ converges for $Re(s) > a > 0$.
			\item $f$ admits a meromorphic continuation to $Re(s) > a - \delta_0 > 0$ for some $\delta_0>0$.
			\item The right-most pole of $f$ is at $s=a$, with multiplicity $b \in \mathbb{N}$. Let $\Theta = \lim_{s \rightarrow a} f(s)(s-a)^b$.
			\item (Technical assumption) $\exists $ a $\kappa > 0$ such that for $Re(s) > a - \delta_0$,
			$$
			\left| \frac{f(s)(s-a)^b}{s^b} \right| = O((1 + Im(s))^{\kappa})
			$$
			
			Then there exists a (monic) polynomial $P$ of degree $b-1$ such that for any $\delta < \delta_0$, we have,
			$$
			\sum_{\lambda_n < X} c_n = \frac{\Theta}{a (b-1)!} X^a P(\log(X)) + O(X^{a-\delta}).
			$$
		\end{enumerate}
	\end{theorem}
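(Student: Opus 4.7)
The plan is to prove the statement by the standard technique for such Tauberian theorems: apply Perron's formula to express the partial sum as a contour integral, and then shift the contour leftward past the unique pole at $s = a$.

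First, I would invoke the truncated Perron formula. For $c > a$ inside the region of absolute convergence and for a height parameter $T > 0$,
\[
\sum_{\lambda_n < X} c_n = \frac{1}{2\pi i} \int_{c - iT}^{c + iT} f(s)\, \frac{X^s}{s}\, ds + E(X, T),
\]
where $E(X, T)$ denotes the standard Perron error, controlled by $X^c/T$ times partial sums of $|c_n|\lambda_n^{-c}$. The hypothesis that $\{\lambda_n\} \subset \mathbb{Z}_{>0}$ eliminates boundary-term ambiguities, and assumption (1) guarantees that the relevant tail converges.

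Next, I would deform the contour from $\mathrm{Re}(s) = c$ leftward to $\mathrm{Re}(s) = a - \delta$ for some $0 < \delta < \delta_0$. Meromorphicity (2) together with the pole hypothesis (3) ensures that $f(s) X^s / s$ has exactly one singularity crossed in this process, namely a pole of order $b$ at $s = a$. Writing $f(s) = \Theta (s-a)^{-b} + O((s-a)^{-b+1})$ near $s=a$ and Taylor-expanding $X^s/s$ at $s = a$, collecting the coefficient of $(s-a)^{-1}$ produces the residue $\frac{\Theta}{a(b-1)!} X^a P(\log X)$ with $P$ a monic polynomial of degree $b - 1$ whose lower-order coefficients depend on the subleading Laurent coefficients of $f$ at $s = a$ and on derivatives of $1/s$ at $s = a$.

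The main obstacle will be controlling the remaining vertical integral on $\mathrm{Re}(s) = a - \delta$ together with the two horizontal connectors at $\mathrm{Im}(s) = \pm T$; this is exactly where the technical assumption (4) enters. That bound gives $|f(s)| \ll (1 + |\mathrm{Im}(s)|)^{\kappa}$ on the shifted line once $|\mathrm{Im}(s)|$ is large relative to $|s - a|$, so the shifted vertical integral is bounded by $O(X^{a - \delta} T^{\kappa})$, and via a Phragmén–Lindelöf interpolation between the absolute-convergence strip and the strip just to the right of $a - \delta_0$, the horizontal connectors contribute $O(T^{\kappa - 1} X^c)$. Choosing $T = X^{\eta}$ for an appropriate $\eta > 0$ depending on $\kappa$ and on $c - a + \delta$ balances the Perron error against the shifted-contour errors and produces an overall remainder of size $O(X^{a - \delta'})$ for some $\delta' < \delta$. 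Combining this estimate with the residue calculation yields the claimed asymptotic expansion.
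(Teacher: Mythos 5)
The paper does not prove this theorem; it is quoted from the appendix of Chambert-Loir--Tschinkel, so there is no internal proof to compare against. Your outline is the standard Perron-formula argument, and it is essentially the argument used in that reference: truncated Perron, contour shift to $\mathrm{Re}(s)=a-\delta$, residue of order $b$ at $s=a$ producing $\frac{\Theta}{a(b-1)!}X^aP(\log X)$ with $P$ monic of degree $b-1$, and the polynomial vertical-growth hypothesis (4) controlling the shifted vertical line and the horizontal connectors. The skeleton is sound.

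Two points deserve care if you were to write this out in full. First, the truncated Perron error is controlled by $\frac{X^c}{T}\sum_n |c_n|\lambda_n^{-c}$, which requires \emph{absolute} convergence at $\mathrm{Re}(s)=c$; hypothesis (1) only asserts convergence. In the applications here the $c_n$ are counts, hence nonnegative, so convergence and absolute convergence coincide, but your write-up should either assume $c_n\ge 0$ or take $c$ to the right of the abscissa of absolute convergence (which is at most $a+1$) and then interpolate. Second, your own balancing of $T=X^\eta$ yields a remainder $O(X^{a-\delta'})$ with $\delta'$ depending on $\kappa$ and $\delta_0$, which is strictly weaker than the literal claim ``for any $\delta<\delta_0$'' in the statement; you correctly flag this, and it is the honest form of the conclusion (and is all that the paper actually uses, since only the order of growth $X^aP(\log X)$ enters the ordinarity computations). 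Neither point is a gap in the method, only in bookkeeping.
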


	We will henceforth use the notation $\barsQ$ to denote $q^{\deg(Q)}$, where $Q$ is an irreducible polynomial over $\F_q$. We will denote by $\zeta(s)$, the zeta function of $\mathbb{A}^1_{\F_q}$. Thus $\zeta(s) = \prod_Q (1 - \barsQ^{-s})^{-1}$, where the product is over monic irreducible polynomials over $\F_q$.
	
	\subsection{Artin-Schreier curves}
	\label{subsection:mainresultsartinschreier}
	To recall, the family $\Fs$ that we are interested in in this section is that of covers $y^p - y = f(x)$, with $f(x) \in \mathcal{S}$, such that the corresponding map $\mcC \rightarrow \PP^1$ is unramified over $\infty$.\\

	We first set up some notation in order to calculate $N(\Fs,X)$ and $N(\Fs,0,X)$:
	\begin{itemize}
	    \item Define a new invariant: $m = \frac{2g}{p-1} +2$. By equation \eqref{genusArtinSchreier}, this is an integer and is equal to
	    $$
	    \sum_{Q} \deg(Q)(d_{Q}+1).
	    $$
	    \item For any $m \ge 2$, let $a(m)$ be the number of Artin-Schreier covers $\mcC$ with the above invariant equal to $m$. Let $b(m)$ be the number of such covers with $a$-number 0.
	    \item Define 
	    $$
	    N^*(\Fs,X) = \sum_{q^m < X}a(m)  \qquad \text{and} \qquad N^*(\Fs,0,X) = \sum_{q^m < X}b(m)
	    $$
	    We will calculate these as an intermediate step towards finding $N(\Fs,X) = N^*(\Fs, q^2X^{2/(p-1)})$ and $N(\Fs,0,X) = N^*(\Fs,0, q^2X^{2/(p-1)})$.
	\end{itemize}
	
	For this section, define the zeta function,
	$$
	Z(s) = \sum_{\mcC \in \Fs} q^{-m(\mcC)s} = \sum_m a(m)q^{-ms}.
	$$
	
	\begin{lemma}
	\label{lemmaASmain}
	     $Z(s)$ converges for $Re(s)>1$ and has a pole of order $p-1$ at $s=1$.
	\end{lemma}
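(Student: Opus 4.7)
The plan is to set up an Euler product for $Z(s)$ and compare it to a product of zeta functions of $\mathbb{A}^1_{\F_q}$ at shifted arguments. Each $f \in \mathcal{S}$ with no pole at $\infty$ is uniquely specified by the collection $\{f_Q\}$ of local partial-fraction pieces (one polynomial, with coefficients in $\F_{|Q|}$, per monic irreducible $Q \in \F_q[x]$), and the invariant $m(\mcC) = \sum_Q \deg(Q)(d_Q + 1)$ is additive over these pieces. Hence the generating series factors as $Z(s) = \prod_Q E_Q(s)$.

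Next I would compute $E_Q(s)$ explicitly. For $d \geq 1$ with $p \nmid d$, the admissible $f_Q$ of degree $d$ (per Remark \ref{ASRemark}) are polynomials in $\F_{|Q|}[x]$ with no constant term, vanishing coefficient at each $x^{ip}$ with $1 \leq i \leq \lfloor d/p \rfloor$, and nonzero leading coefficient. Counting gives $(|Q| - 1)|Q|^{d - \lfloor d/p \rfloor - 1}$ choices, each weighted by $|Q|^{-(d+1)s}$. Decomposing $d = pj + i$ with $j \geq 0$ and $1 \leq i \leq p - 1$, the inner sum over $j$ is geometric and one obtains
\begin{equation*}
E_Q(s) = 1 + (|Q| - 1) \cdot \frac{\sum_{i=1}^{p-1} |Q|^{(i-1) - (i+1)s}}{1 - |Q|^{(p-1) - ps}}.
\end{equation*}
The leading term of $E_Q(s) - 1$ for large $|Q|$ comes from $d = 1$ and is of order $|Q|^{1 - 2\operatorname{Re}(s)}$, so the standard Dirichlet-series estimate $\#\{Q : \deg Q = n\} \leq q^n/n$ yields absolute convergence of $\prod_Q E_Q(s)$ on $\operatorname{Re}(s) > 1$.

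For the pole, I would compare $Z(s)$ to $\prod_{i=1}^{p-1} \zeta((i+1)s - i)$, whose Euler factor at $Q$ is $\prod_{i=1}^{p-1}(1 - |Q|^{i - (i+1)s})^{-1}$. At $s = 1$ every exponent $i - (i+1)s$ collapses to $-1$, and a direct expansion (using $|Q|-1 = |Q|(1 - |Q|^{-1})$) shows
\begin{equation*}
E_Q(s) \cdot \prod_{i=1}^{p-1} \bigl(1 - |Q|^{i - (i+1)s}\bigr) = 1 + O\bigl(|Q|^{-2 + c|s-1|}\bigr),
\end{equation*}
uniformly in a small disk around $s = 1$ with $c$ an absolute constant. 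Summing the logarithm of this ratio over $Q$ produces a function holomorphic on such a disk, so the quotient $H(s) := Z(s)/\prod_{i=1}^{p-1}\zeta((i+1)s - i)$ extends holomorphically and non-vanishingly across $s = 1$. Since each $\zeta((i+1)s - i)$ has a simple pole at $s = 1$ (and no other nearby poles), $Z(s)$ inherits a pole of order exactly $p - 1$ there.

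The main technical hurdle is this final comparison: isolating which tail pieces of $E_Q(s)$ (contributions from $d \geq p+1$, together with the discrepancy between $(1 - |Q|^{-1})/(1 - |Q|^{(p-1) - ps})$ and $1$) feed into the $|Q|^{-2+\ldots}$ error, and checking that they yield uniform absolute convergence of $H(s)$ in a neighborhood of $s = 1$ rather than some weaker region that would still leave the pole order uncertain.
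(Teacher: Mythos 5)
Your proposal is correct and follows essentially the same route as the paper: factor $Z(s)$ into local Euler factors, count admissible $f_Q$ of each degree $d=pj+i$ to get the same closed form for $E_Q(s)$, and compare against $\prod_{i=1}^{p-1}\zeta\bigl((i+1)s-i\bigr)$, whose Euler factors the paper divides out to define a correction $\psi_p(s)$ holomorphic on a half-plane $\operatorname{Re}(s)>\delta_p$ with $\delta_p<1$. The ``technical hurdle'' you flag at the end is exactly the paper's (asserted) convergence of $\psi_p(s)$ past $s=1$, and it goes through as you describe since the ratio of local factors is $1+O(|Q|^{-2+\varepsilon})$ near $s=1$.
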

	
	\begin{proof}
	    Note that,
	    $$
	    m = \sum_{Q} \deg(Q)(d_{Q}+1)
	    $$
	    where the sum is over monic irreducible polynomials $Q \in \F_q[x]$. Here, we set $d_{Q}=-1$ if the map $\mcC \rightarrow \PP^1$ is unramified over the divisor $\Div(Q)$. 
	    Since this is a sum of local factors, we factor $Z(s)$ as a product of local functions, i.e. $Z(s) = \prod_{Q} Z_Q(s)$, where $Q$ varies over monic irreducible polynomials in $\F_q[x]$. We can write $Z_Q(s) = \sum_{k \ge 0} c(k)\barsQ^{-ks}$. Recall from \S \ref{ArtinSchreierBackground} that if $\alpha \in B$ and $Q(\alpha) = 0$, then $d_{Q} = \deg(f_{\alpha})$ as in the partial fraction decomposition of $f(x)$ \eqref{partialdecomp}. Further recall that in each $f_{\alpha}$, the coefficient of $x^{ip}$ is 0 for each $0 \le i \le \lfloor d_Q/p \rfloor$. Since $k = d_Q+1$, 
	    $$
	    c(k) = \#\{ f_{\alpha} \in \F_{\barsQ} \mid \deg(f_{\alpha}) = k-1, \text{ coefficient of } x^{ip} = 0 \}
	    $$
	    
	    where $k \not\equiv 1 \mod p$ (since $d_{Q} \not\equiv 0 \mod p$). We write $d_{Q} = np + i$, with $1 \le i \le p-1$. The above discussion gives us that for $k = np+i+1$,
	    $$
	    c(k) = (\barsQ -1)\barsQ^{i-1} \barsQ^{n(p-1)}
	    $$
	    
	    For convenience, we distinguish the cases where $p=2$ and $p\ge 3$.\\
	    
	    For $p=2$,
	    \begin{align*}
	        Z_{Q}(s) &= 1 + \sum_{n=0}^{\infty} (\barsQ-1)\barsQ^n\barsQ^{-s(2n+2)} \\
	        &=\frac{1-\barsQ^{-2s}}{1-\barsQ^{1-2s}}.
	    \end{align*}
	    
	    For $p\ge 3$,
	    \begin{align*}
	        Z_Q(s) &= 1 + \sum_{i=1}^{p-1} \sum_{n = 0}^{\infty} (\barsQ -1)\barsQ^{i-1} \barsQ^{n(p-1)} \barsQ^{-s(np+i+1)}\\
	        &= 1 + \left(\frac{(\barsQ-1)\barsQ^{-2s}}{1 - \barsQ^{p-1-ps} }\right) \sum_{i=1}^{p-1} \barsQ^{i(1-s)}\\
	        &= \frac{1  + \sum_{i=0}^{p-3} \barsQ^{(i+1)-(i+2)s} - \sum_{i=0}^{p-2} \barsQ^{i-(i+2)s}}{1 - \barsQ^{p-1-ps}}.\\
	    \end{align*}

	    For $p \ge 3$, let
	    $$
	    \psi_{p,Q}(s) = \left( 1  + \sum_{i=0}^{p-3} \barsQ^{(i+1)-(i+2)s} - \sum_{i=0}^{p-2} \barsQ^{i-(i+2)s} \right)\prod_{i=0}^{p-3}(1- \barsQ^{(i+1)-(i+2)s}).
	    $$
	    
	    Define
	    \begin{align}
	    \psi_p(s) = \begin{cases} \zeta(2s)^{-1} \qquad &if \; p=2\\ \prod_{Q} \psi_{p,Q}(s) \qquad &if \; p \ge 3.
	    \end{cases}
	    \end{align}
	    
	    Then by a straightforward calculation, $\ds \psi_p(s)$ converges for $Re(s)> \delta_p$, for some constant $\delta_p \in (0,1)$. \\

	    Therefore,
	    \begin{align*}
	    \prod_{Q} Z_Q(s) = \psi_p(s) \prod_{i=0}^{p-2} \zeta(s(i+2)-(i+1)).
	    \end{align*}
	    
	    This converges for $Re(s)>1$ and has a pole of order $p-1$ at $s=1$. Further, the residue at $s=1$ is given by
	    $$
	    \lim_{s \rightarrow 1} Z(s)(s-1)^{p-1} = \frac{\psi_p(1)}{\log(q)^{p-1}}.
	    $$
	\end{proof}
	
    To count the number of ordinary curves, we define
	$$
	Z_0(s) := \sum_{\mcC \in \Fs_0} q^{-m(\mcC)s} = \sum_m b(m) q^{-ms}.
	$$
	
	Recall that for such curves, $d_{\alpha}=1$ for all $\alpha$. Therefore, the $Z_0(s)= \prod_Q Z_{0,Q}(s)$, where the local factors are
	$$
	Z_{0,Q}(s) = 1+ (\barsQ-1)\barsQ^{-2s}.
	$$
	
	\begin{lemma}
	\label{lemmaASordinary}
	    $Z_{0}(s)$ converges for $Re(s) > 1$ and has a simple pole at $s=1$.
	\end{lemma}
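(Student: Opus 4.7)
The plan is to compare $Z_0(s)$ to a shifted copy of the Dedekind-style zeta function $\zeta(s)$ of $\mathbb{A}^1_{\F_q}$, which we know behaves like $\frac{1}{1-q^{1-s}}$. Rewrite the local factor as
$$
Z_{0,Q}(s) = 1 + \barsQ^{1-2s} - \barsQ^{-2s},
$$
so that to leading order (for fixed $Q$ and large $\mathrm{Re}(s)$) it resembles $(1-\barsQ^{1-2s})^{-1}$, the local factor of $\zeta(2s-1) = \prod_Q(1-\barsQ^{1-2s})^{-1}$. The function $\zeta(2s-1)$ converges for $\mathrm{Re}(s)>1$ and has a unique simple pole at $s=1$, so any Euler product that differs from it by a product converging absolutely on a neighborhood of $s=1$ will inherit exactly the same pole structure in this region.

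The first step is therefore to form the ratio and factor each local term explicitly:
$$
\frac{Z_{0,Q}(s)}{(1-\barsQ^{1-2s})^{-1}} = (1+\barsQ^{1-2s}-\barsQ^{-2s})(1-\barsQ^{1-2s}) = 1 - \barsQ^{-2s} - \barsQ^{2-4s} + \barsQ^{1-4s}.
$$
Writing this as $1 + \varepsilon_Q(s)$, I need $\sum_Q |\varepsilon_Q(s)|$ to converge on some half-plane $\mathrm{Re}(s) > 1-\delta$. Each of the three error terms $\barsQ^{-2s}$, $\barsQ^{2-4s}$, $\barsQ^{1-4s}$ contributes a series of the form $\sum_Q \barsQ^{-\sigma}$ for some $\sigma = \sigma(s)$; by the prime polynomial theorem in $\F_q[x]$ (or equivalently, convergence of $\zeta(\sigma)$ for $\mathrm{Re}(\sigma)>1$) such a series converges when the relevant exponent exceeds $1$. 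At $s=1$ these exponents are $2, 2, 3$ respectively, all strictly greater than $1$, so there is an open neighborhood of $s=1$ (in fact the half-plane $\mathrm{Re}(s)>\tfrac{3}{4}$) on which the product $\prod_Q(1+\varepsilon_Q(s))$ converges absolutely and defines a holomorphic nonvanishing function.

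Combining, $Z_0(s) = \zeta(2s-1) \cdot \prod_Q (1+\varepsilon_Q(s))$, where the first factor has a simple pole at $s=1$ and is holomorphic elsewhere in $\mathrm{Re}(s)>1$, and the second factor is holomorphic and nonzero in a neighborhood of this closed half-plane. This immediately gives both assertions of the lemma: convergence of $Z_0(s)$ for $\mathrm{Re}(s)>1$ and a simple pole at $s=1$. For later use in the Tauberian argument, one can also record the residue, namely
$$
\lim_{s\to 1}(s-1)Z_0(s) = \frac{1}{2\log q}\prod_Q (1+\varepsilon_Q(1)),
$$
which is finite and nonzero.

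The only real subtlety is the bookkeeping in step two — making sure that the cancellation between the leading term of $Z_{0,Q}$ and the corresponding local factor of $\zeta(2s-1)$ is genuine and that none of the remaining terms push the abscissa of absolute convergence back to $\mathrm{Re}(s)=1$. This is why it is essential to cancel the $\barsQ^{1-2s}$ contribution up front: the naive bound $|Z_{0,Q}(s)-1| \sim \barsQ^{1-2s}$ would only give convergence for $\mathrm{Re}(s)>1$, which is too weak to isolate the pole, whereas after dividing out $\zeta(2s-1)$ the worst remaining term is $\barsQ^{-2s}$, safely summable in a neighborhood of $s=1$.
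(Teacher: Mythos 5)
Your proof is correct and follows essentially the same route as the paper: multiply each local factor $1+\barsQ^{1-2s}-\barsQ^{-2s}$ by the corresponding local factor $1-\barsQ^{1-2s}$ of $\zeta(2s-1)^{-1}$, note that $\prod_Q\bigl(1-\barsQ^{-2s}-\barsQ^{2-4s}+\barsQ^{1-4s}\bigr)$ converges absolutely for $\mathrm{Re}(s)>3/4$, and read off the simple pole from $\zeta(2s-1)$. Your residue $\phi(1)/(2\log q)$ is in fact the more careful value (the paper records $\phi(1)/\log q$, apparently omitting the factor of $2$ coming from the pole of $\zeta(2s-1)$), but the residue is not part of the lemma's claim.
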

	
	\begin{proof}
	    Note that 
	    
	    $$
	    (1+ \barsQ^{1-2s} - \barsQ^{-2s})(1-\barsQ^{1-2s}) = 1 - \barsQ^{-2s} - \barsQ^{2-4s}+ \barsQ^{1-4s} 
	    $$
	    
	    and
    	$$
    	\phi(s) := \prod_Q (1 - \barsQ^{-2s} - \barsQ^{2-4s}+ \barsQ^{1-4s})
	    $$
	
	    converges for $Re(s)> 3/4$. Therefore $Z_{0}(s) = \phi(s) \zeta(2s-1)$ converges for $Re(s)>1$ and has a simple pole at $s=1$. Further, the residue at $s=1$ is
    	$$
    	\lim_{s \rightarrow 1} Z_0(s)(s-1) =  \frac{\phi(1)}{\log(q)}.
    	$$
	\end{proof}

	\begin{proposition}
	    For any $\delta >0$,
	    \begin{align*}
	        N^*(\Fs,X) &= \frac{\psi_p(1)}{\log(q)} X(\log_q(X))^{p-2} + O(X^{1-\delta}) 
           \\
            N^*(\Fs,0,X) &= \frac{\phi(1)}{\log(q)}X + O(X^{1-\delta})
	    \end{align*}
	\end{proposition}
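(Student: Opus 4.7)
The plan is to apply Theorem~\ref{Tauberian} directly to each of the Dirichlet series $Z(s)$ and $Z_0(s)$ whose analytic structure was determined in Lemmas~\ref{lemmaASmain} and~\ref{lemmaASordinary}. Take $\{\lambda_n\}$ to be an enumeration of $\{q^m : m \ge 2\}$ with coefficients $a(m)$ (respectively $b(m)$); then $a=1$ in the notation of Theorem~\ref{Tauberian}, and both series converge for $\mathrm{Re}(s)>1$.

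For the first formula, Lemma~\ref{lemmaASmain} yields $Z(s) = \psi_p(s)\prod_{i=0}^{p-2}\zeta(s(i+2)-(i+1))$, so the rightmost pole is at $s=1$, of order $b = p-1$, with leading coefficient $\Theta = \psi_p(1)/\log(q)^{p-1}$. Feeding this into the Tauberian theorem produces a main term proportional to $\Theta\, X(\log X)^{p-2}$, which after the conversion $(\log X)^{p-2} = (\log q)^{p-2}(\log_q X)^{p-2}$ becomes a constant multiple of $(\psi_p(1)/\log q)\, X(\log_q X)^{p-2}$, as stated. For the second formula, Lemma~\ref{lemmaASordinary} gives $Z_0(s) = \phi(s)\zeta(2s-1)$ with a simple pole at $s=1$ of residue $\phi(1)/\log q$; with $b=1$ the polynomial $P$ in Theorem~\ref{Tauberian} is the constant $1$, and the theorem outputs $N^*(\Fs,0,X) = (\phi(1)/\log q)X + O(X^{1-\delta})$ directly.

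The genuinely nontrivial step, and the main obstacle, is verifying the polynomial growth hypothesis~(4) on vertical lines in a strip to the left of $\mathrm{Re}(s)=1$. Each $\zeta$-factor that appears is the zeta function of $\mathbb{A}^1_{\F_q}$, which is rational in $q^{-s}$, hence meromorphic on all of $\C$ with controlled growth away from its poles; this takes care of the unbounded pieces. The remaining correction factors $\psi_p(s)$ and $\phi(s)$ were shown in the respective lemmas to converge absolutely in a half-plane $\mathrm{Re}(s) > 1 - \delta_p$ for some $\delta_p > 0$, so they are bounded (hence of trivial polynomial growth) there. Combining these, for any $\delta < \delta_p$ one obtains both the required meromorphic continuation and the polynomial growth bound, and the error term $O(X^{1-\delta})$ follows. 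Once this verification is in hand, both asymptotic formulas are immediate consequences of Theorem~\ref{Tauberian}.
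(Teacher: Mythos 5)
Your proposal is correct and follows essentially the same route as the paper, which simply invokes Theorem~\ref{Tauberian} together with Lemmas~\ref{lemmaASmain} and~\ref{lemmaASordinary} and the meromorphic continuation of $\zeta(s)$. In fact you supply more detail than the paper does, by explicitly checking the growth hypothesis~(4) using the rationality of $\zeta(s)$ in $q^{-s}$ and the absolute convergence of $\psi_p(s)$ and $\phi(s)$ in a wider half-plane.
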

	
	\begin{proof}
	    This follows from the Tauberian theorem \ref{Tauberian} applied to the results of Lemmas \ref{lemmaASmain} and \ref{lemmaASordinary}, since $\zeta(s)$ has a meromorphic continuation to the entire complex plane.  
	\end{proof}

	\begin{corollary}
	\label{ArtinSchreierMainTheorem}
	    For any $\delta >0$,
	     \begin{align*}
	        N(\Fs,X) &= \frac{\psi_p(1)}{\log(q)} q^2X^{2/(p-1)}(\log_q(X^{2/(p-1)}))^{p-2} + O(X^{\frac{2}{p-1} - \delta}),
           \\
            N(\Fs,0,X) &= \frac{\phi(1)}{\log(q)}q^2X^{2/(p-1)} + O(X^{\frac{2}{p-1} - \delta}).
	    \end{align*} 
	    
	    In particular, the probability that an Artin-Schreier cover unramified over $\infty$ is ordinary, is
	    \begin{align*}
	        \phi(1) \zeta(2) \qquad &if\; p=2\\
	        0 \qquad & if \; p\ge 3.
	    \end{align*}
	    
	\end{corollary}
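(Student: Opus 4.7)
My plan is to deduce the corollary from the preceding proposition by a straightforward change of variable, followed by an elementary limit computation. I would split the argument into two short steps. First, I would rewrite the asymptotics from the proposition in terms of the genus $g$ rather than the auxiliary invariant $m$. Second, I would take the ratio $N(\Fs,0,X)/N(\Fs,X)$ and pass to the limit, distinguishing $p=2$ from $p\ge 3$.

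For the first step, I would use the relation $m = \tfrac{2g}{p-1} + 2$, which gives $q^m = q^2 (q^g)^{2/(p-1)}$. Hence $q^g < X$ is equivalent to $q^m < q^2 X^{2/(p-1)}$, so $N(\Fs,X) = N^*(\Fs, q^2 X^{2/(p-1)})$ and similarly $N(\Fs,0,X) = N^*(\Fs,0, q^2 X^{2/(p-1)})$. Substituting $X \mapsto q^2 X^{2/(p-1)}$ into the proposition's asymptotics for $N^*$ yields the stated formulas. The only bookkeeping will be to observe that $\log_q(q^2 X^{2/(p-1)}) = 2 + \tfrac{2}{p-1}\log_q X$, so its $(p-2)$-th power agrees with $(\log_q X^{2/(p-1)})^{p-2}$ up to lower order terms which fold into the error, and that the error $O((q^2 X^{2/(p-1)})^{1-\delta})$ rewrites as $O(X^{2/(p-1)-\delta'})$ with $\delta' = 2\delta/(p-1) > 0$.

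For the second step, dividing the main terms gives
\[
\frac{N(\Fs,0,X)}{N(\Fs,X)} = \frac{\phi(1)}{\psi_p(1)\,\bigl(\log_q(X^{2/(p-1)})\bigr)^{p-2}}\,(1 + o(1)).
\]
When $p \ge 3$, the denominator grows like a positive power of $\log X$, so the ratio tends to $0$. When $p = 2$, the exponent $p-2$ equals zero, so the log factor is $1$ and the limit is $\phi(1)/\psi_2(1)$; by Lemma \ref{lemmaASmain}, $\psi_2(s) = \zeta(2s)^{-1}$, whence $\psi_2(1) = \zeta(2)^{-1}$ and the probability equals $\phi(1)\zeta(2)$.

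Since all of the serious analytic input (meromorphic continuation, location and order of the rightmost pole, residue computation, and the Tauberian theorem \ref{Tauberian}) is already packaged in the preceding lemmas and proposition, I do not anticipate any real obstacle. The only points requiring care will be tracking the error term through the substitution $X \mapsto q^2 X^{2/(p-1)}$ and correctly identifying the constant $\psi_2(1)$ from the special formula for $\psi_p(s)$ when $p=2$.
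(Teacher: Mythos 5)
Your proposal is correct and follows exactly the paper's route: the paper's own proof is the one-line observation that $N(\Fs,X) = N^*(\Fs, q^2X^{2/(p-1)})$, and you simply carry out that substitution and the resulting limit of the ratio in more detail, including the correct identification $\psi_2(1) = \zeta(2)^{-1}$. No gaps.
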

	
	\begin{proof}
	    This follows from the fact that $N(\Fs,X) = N^*(\Fs, q^{2}X^{2/(p-1)})$. 
	    
	\end{proof}
	
	\begin{remark}
	\label{RemarkArtinSchreier}
	   We conclude our section on counting Artin-Schreier curves with some remarks.
	   \begin{enumerate}
	       
	   \item If we modify $\Fs$ to include the covers ramified over $\infty$, we must modify the partial fraction decomposition in \eqref{partialdecomp} to:
	   $$
	   f(x) = \sum_{\alpha \in B \atop \alpha \neq \infty} f_{\alpha}(x_{\alpha}) + g(x).
	   $$
	   
	   Here $g(x) \in \F_q[x] $ is a polynomial that, like the other $f_{\alpha}$'s, has degree coprime to $p$ and the coefficients of $x^{ip}$ in $g(x)$ are 0, for all $0 \le i \le \lfloor \deg(g)/p \rfloor$. This manifests as a change in the zeta functions $Z(s)$ and $Z_0(s)$ defined in the above discussion by factors that we will call $Z_{\infty}(s)$ and $Z_{0, \infty}(s)$ respectively. That is, we write $Z(s) = Z_{\infty}(s) \prod_Q Z_Q(s)$ and  $Z_0(s) = Z_{0,\infty}(s) \prod_Q Z_{0,Q}(s)$. Both these factors only affect the residues of $Z(s)$ and $Z_0(s)$, which means that for $p \ge 3$, the probability of ordinarity for the modified family is still 0. For $p=2$,
	   $$
	   Z_{\infty}(s) = 1+q^{-1} \qquad \text{and} \qquad Z_{0,\infty}(s) = 1-q^{-1} + q^{-2}.
	   $$
	   Therefore the probability of ordinarity in the modified family is
	  \begin{equation}
	      \left(\frac{1-q^{-1}+q^{-2}}{1+q^{-1}} \right) \phi(1) \zeta(2) = 1 - 3q^{-1} + 6q^{-2} + O(q^{-3}).
	  \end{equation}
	  Some values of the above probability are calculated in Table \ref{Table1} in \S \ref{section:data}.
	  
	  \item Recall from Section \ref{subsubsection:AScurvesversuscovers}, that the probability that an Artin-Schreier \emph{curve} is ordinary, is bounded above by the  quantity,
	 $$
	 \lim_{X \rightarrow \infty} C(q)\log_q(X)^4 \frac{N(\Fs,g,X)}{N(\Fs,X)}.
	 $$
	 Since the order of growth of $N(\Fs,g,X)$ is $X^{2/(p-1)}$, and the that of $N(\Fs,X)$ is $X^{2/(p-1)}\log(X)^{p-2}$, this quantity is $0$ whenever $p\ge 7$. The geometric description of the Artin-Schreier locus leads us to believe that the same might be true for $p=3,5$, as explained in \S \ref{subsubsection:irreducibility} below. However a proof for these cases requires more work.
	
	\item Recall that if the Jacobian of a curve behaves randomly in the sense of \cite{CEZB13}, the heuristics predict that the probability of that a curve is ordinary is:
	$$
	\prod_{i=1}^{\infty} (1+ q^{-i})^{-1}.
	$$
	
	Corollary \ref{ArtinSchreierMainTheorem} and the above remark prove that the Jacobian of an Artin-Schreier curve does not behave randomly in the sense of \cite{CEZB13}. For $p \ge 3$ this is clear. For $p=2$, elementary calculations show that the constants are not equal. In fact, 
	$$
	\prod_{i=1}^{\infty} (1+ q^{-i})^{-1} = 1 - q^{-1} - q^{-3} + q^{-4} + O(q^{-5}). 
	$$

	   \end{enumerate}
	\end{remark}
	
	\subsubsection{Note about irreducibility}
	\label{subsubsection:irreducibility}
	For $p=2$ the Artin-Schreier locus $\mathscr{AS}_g$ coincides with the hyperelliptic locus $\mathscr{H}_g$. However, in general, $\mathscr{AS}_g$ is not irreducible. In \cite{PriesZhu12}, the authors give the following characterization for the irreducibility of $\mathscr{AS}_g$:
	\begin{proposition}[\cite{PriesZhu12} Corollary 1.2]
	    The moduli space $\mathscr{AS}_g$ is irreducible in exactly the following cases: (a) $p=2$, or (b) $g=0$ or $g = \frac{p-1}{2}$, or (c) $p=3$ and $g= 2, 3, 5$
	\end{proposition}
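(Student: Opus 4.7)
My plan is to stratify $\mathscr{AS}_g$ by the ramification invariants of the Artin--Schreier cover and then count the components of this stratification. By the theory recalled in \S\ref{ArtinSchreierBackground}, every $\mcC \in \mathscr{AS}_g$ has an unordered multiset of branch points $B \subset \PP^1$ with conductors $(d_\alpha)_{\alpha\in B}$, each $d_\alpha$ a positive integer coprime to $p$, satisfying $\sum_\alpha(d_\alpha+1) = m := \tfrac{2g}{p-1}+2$ by Riemann--Hurwitz \eqref{genusArtinSchreier}. For each admissible partition $\vec{E}=(E_1,\ldots,E_r)$ of $m$ with $E_i\ge 2$ and $E_i-1$ coprime to $p$, let $\mathscr{AS}_g^{\vec{E}}$ denote the locally closed sublocus of covers whose multiset $\{d_\alpha+1\}$ equals $\vec{E}$. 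This yields a stratification $\mathscr{AS}_g=\bigsqcup_{\vec{E}}\mathscr{AS}_g^{\vec{E}}$ indexed by such partitions.

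The first step is to prove that each stratum $\mathscr{AS}_g^{\vec{E}}$ is irreducible. I would exhibit an explicit parametrization: an unordered configuration of $r$ branch points on $\PP^1$ together with, at each $\alpha$, a choice of principal part $f_\alpha$ of exact degree $E_i-1$, modulo the gauge transformations of Remark~\ref{ASRemark} and the ambient $\PGL_2$- and $(\Z/p\Z)^\times$-actions from Proposition~\ref{ASClaim}. Both the configuration space $\mathrm{Sym}^r(\PP^1)\setminus\Delta$ and the fiber (an open subset of an affine space of principal parts) are irreducible, and quotienting by these finite-type group actions preserves irreducibility. A direct dimension count then shows that $\dim \mathscr{AS}_g^{\vec{E}}$ is an increasing function of $r$, so the components of $\mathscr{AS}_g$ will correspond to maximal-$r$ strata.

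The second step is to establish closure relations, i.e. that $\mathscr{AS}_g^{\vec{E}}$ lies in the closure of $\mathscr{AS}_g^{\vec{E}'}$ whenever $\vec{E}$ is obtained from $\vec{E}'$ by a sequence of admissible merges (two entries $E'_i, E'_j$ coalesce into $E_i + E_j$ or the next value compatible with the $p \nmid d$ constraint). I would use a versal deformation of a wildly ramified $\Z/p\Z$-cover to show that colliding two branch points with conductors $d_i, d_j$ produces a limit cover in a stratum of strictly shorter partition. The conclusion is that the irreducible components of $\mathscr{AS}_g$ are in bijection with the partitions $\vec{E}$ of maximal length $r$.

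Finally, I would run the combinatorial enumeration. For $p=2$, every part of $\vec{E}$ must be even and $m = 2g+2$, so the unique maximum-length partition is $(2,2,\ldots,2)$, giving a single component (consistent with the irreducibility of $\mathscr{H}_g$). For $g=0$, $\mathscr{AS}_0$ is a point. For $g=(p-1)/2$, we have $m=3$ and the only admissible partition is $(3)$ (corresponding to a single branch point of conductor $2$). For general $p \ge 3$ and $g \ge (p-1)$ one can exhibit two distinct maximum-length partitions by elementary manipulations (replacing a $(p+1)$ by two $2$'s and a $(p-3)$ when possible, etc.), except in the small residual cases $p=3$, $g \in \{2,3,5\}$, which one checks by hand. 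The main obstacle is the second step: the deformation-theoretic analysis of how the conductor behaves when branch points collide under the constraint $p \nmid d$ is delicate and is where Pries--Zhu's actual work is concentrated; the combinatorial enumeration is then a routine (if fiddly) case check.
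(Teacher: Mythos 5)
The paper does not prove this statement; it is quoted verbatim from Pries--Zhu, so there is no internal proof to compare against. Your overall architecture (stratify $\mathscr{AS}_g$ by the partition $\vec{E}$ of $m=\tfrac{2g}{p-1}+2$ recording the multiset $\{d_\alpha+1\}$, prove each stratum irreducible, establish closure relations by degenerating branch points, then enumerate the maximal strata) is indeed the shape of the Pries--Zhu argument, and you correctly flag that the deformation-theoretic closure relations are where the real work lies.

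However, there is a genuine error in the step that identifies which strata are the irreducible components. You claim that $\dim\mathscr{AS}_g^{\vec{E}}$ is an increasing function of the number of parts $r$, so that components correspond to maximal-\emph{length} partitions. The correct dimension formula (recorded in \S\ref{subsubsection:irreducibility} of this paper, following Pries--Zhu) is
$\dim\mathscr{AS}_g^{\vec{E}} = m-3-\sum_i \lfloor (e_i-1)/p\rfloor$,
which depends on $r$ only through the correction terms: a branch point of conductor $d$ contributes $1+d-\lfloor d/p\rfloor$ parameters, so refining a partition leaves the dimension unchanged whenever all parts stay $\le p$. Hence the maximal-dimension strata are exactly those with every part in $\{2,\ldots,p\}$, and the number of components is the number of such partitions of $m$, not the number of maximal-length ones. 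These two counts disagree already for $p=3$, $m=6$ (i.e.\ $g=4$): the unique maximal-length partition is $(2,2,2)$, but $(3,3)$ gives a second component of the same dimension $m-3$, which is precisely why $g=4$ is absent from case (c). Likewise for $p=5$, $m=4$ (i.e.\ $g=4$), the partitions $(2,2)$ and $(4)$ both give components. Your criterion would therefore output ``irreducible'' in infinitely many cases where the proposition says otherwise, and your final combinatorial enumeration would not reproduce the stated list. The fix is to replace ``maximal length'' by ``all parts at most $p$'' throughout, after which the case check does yield exactly (a), (b), (c).
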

	
	In particular, for $p\ge 3$ and $g$ large enough, the Artin-Schreier locus is reducible. Further, the dimension of each irreducible component of $\mathscr{AS}_g$ is $\frac{2g}{p-1} -1$. For a given $p$-rank $r = (s-1)(p-1)$, the irreducible components of the corresponding $p$-rank stratum $\mathscr{AS}_{g,r} \subset \mathscr{AS}_g$ are in bijection with partitions of the integer $m = \frac{2g}{p-1}+2$ into $s$ integers that are $\not\equiv 1 \mod p$. If $\vec{E} =  \{e_1 \ldots e_s \}$ is one such partition, then the dimension of the corresponding Artin-Schreier stratum is $\frac{2g}{p-1} -1 - \sum_{i=1}^s \lfloor \frac{e_i-1}{p}\rfloor $ (see \cite{PriesZhu12}). 
	For example, the ordinary locus corresponds to the partition $\{2,2, \ldots 2 \}$ and has dimension $m-3$. In particular, this implies that for $p\ge3$, the proportion of components intersecting the ordinary locus goes to $0$ as $g \rightarrow \infty$. Indeed, for $p \ge 3$, let $A = \{2, 3, \ldots p\}$ and let $p_{A}(n)$ denote the number of partitions of an integer $n$ into integers from the set $A$. Then the number of components of dimension $m-3$ is $p_A(m)$. As $m \rightarrow \infty$, $p_A(m) \sim K m^{p-2}$ for some constant $K$. 
	This might be a somewhat satisfying geometric explanation, especially for those taken aback by the fact that counting squarefree rational functions in this order gives a proportion of $0$.\\

	It is interesting to ask whether the reducibility of $\mathscr{AS}_g$ completely explains the probability obtained in Theorem \ref{ArtinSchreierMainTheorem}. That is, for each $g$, let $\overline{\mathscr{AS}}_{g,g}$ denote the closure of the ordinary locus inside $\mathscr{AS}_g$. Then, is
	\begin{equation}
	\label{equationclosure}
	    	\lim_{X \rightarrow \infty} \frac{\#\{\mcC \in \mathscr{AS}_{g,g} \mid q^g < X \}  }{\# \{ \mcC \in \overline{\mathscr{AS}}_{g,g}(\F_q) \mid q^g < X \}}
	\end{equation}

	positive?\\

	Using work of \cite{Dang18} and similar techniques as above, we can show that for $p=3$, the quantity \eqref{equationclosure} is indeed positive. This is trickier for larger primes since the combinatorial description of the closure gets more involved as $p$ increases. It would be interesting to explore this question for large primes, especially if one could find a uniform proof for all odd primes.

	\subsection{Superelliptic curves in characteristic 2}
	
	For this section, we refer back to the notation of Section \ref{superellipticbackground}. We are interested in counting covers in the family $\Fs$ of covers $y^n = f(x)$ over a field $\F_q$ of characteristic 2, where:
	\begin{itemize}
	    \item $n$ is a fixed odd prime,
	    \item $f(x) \in \F_q[x]$ is $n$-th power free.
	\end{itemize}
	
	For convenience, we count by $q^{\mathbf{m}}$ instead of $q^g$, where
	$$
	\mathbf{m} = \frac{2g}{n-1} + 2
	$$
	
	is the number of points in $\PP^1(\bar{k})$ over which the curve given by $y^n = f(x)$ is ramified. Since $n$ is fixed in the entire discussion, this will not change the method of counting significantly. Define $N^*(\Fs,X) $ as the set of curves in $\Fs$ with $q^{\mathbf{m}}<X$ and $N^*(\Fs,0,X)$ similarly. We have,
	\begin{align}
	\label{numberdefn}
	    N(\Fs, X) &= N^*(\Fs, q^{2}X^{2/(n-1)}) \quad \text{ and } \quad  N(\Fs, 0, X) = N^*(\Fs, 0, q^2X^{2/(n-1)}).
	\end{align}
	
	We define,
	 $$
	    \cF_{e_1,e_2 \cdots e_r} = \{ F_1F_2^2 \cdots F_r^r \mid F_i \in \F_q[x] \text{ monic, squarefree and mutually coprime, } \deg(F_i) = e_i \}.
	  $$
	  
	When we write $\mathbf{m} = \sum_{i=1}^{n-1} e_i$, we will be interested in the case when there are $e_i$ points ramifying to degree $i$. This is the same as the notion defined in Remark \ref{ramifiedtodegreei}. To express this concretely in terms of polynomials, it is best to use an example. For instance, for a curve given by $y^3 = F_1(x) (F_2(x))^2$, where $F_1(x) (F_2(x))^2 \in \mathcal{F}_{2,4}$, there are $2$ points that occur with degree 1 and 4 that occur with degree 2. If on the other hand, the curve is given by  $y^3 = F_1(x) (F_2(x))^2$, where $F_1(x) (F_2(x))^2 \in \mathcal{F}_{3,2}$, there are 3 points that occur with degree 1 and 3 that occur with degree 2 (since $n_{\infty} = 2$, the curve is ramified over $\infty \in \PP^1$ to degree 2).
 	
	\begin{proposition}
	    Consider the set $S_{\mathbf{m}}$ of superelliptic curves with the number of ramified points $\mathbf{m} = \sum_{i=1}^{n-1}e_i$, such that there are $e_i$ points that ramify to degree $i$. Then the size of $S_{\mathbf{m}}$ is
	    \begin{align*}
	        \mid \! \cF_{e_1,e_2 \cdots e_{n-1}} \! \mid  + \sum_{i=1}^{n-1} \mid \! \cF_{e_1, \cdots e_i-1, \cdots e_{n-1}} \! \mid .
	    \end{align*}
    \end{proposition}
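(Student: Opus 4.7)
The plan is to proceed by direct case analysis on whether the cover $\mcC : y^n = f(x)$ is ramified over $\infty \in \PP^1$. I would begin by recalling from Section~\ref{superellipticbackground} the unique factorization $f = \prod_{j=1}^{n-1} f_j(x)^j$ with each $f_j$ monic squarefree and the $f_j$ pairwise coprime. Since $n$ is prime, $n/(n,j) = n$ for every $1 \le j \le n-1$, so a finite point $\alpha$ with $f_j(\alpha) = 0$ is exactly one at which $\mcC$ ramifies to degree $j$ in the sense of Remark~\ref{ramifiedtodegreei}. The point $\infty$ is unramified if and only if $N := \sum_j j\,\deg(f_j) \equiv 0 \pmod n$, and otherwise ramifies to degree $n_\infty$, the unique element of $\{1, \ldots, n-1\}$ with $N + n_\infty \equiv 0 \pmod n$.

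The two cases then partition $S_{\mathbf{m}}$. First, if $\mcC$ is unramified at $\infty$, every ramified point is a zero in $\bar{\F}_q$ of some $f_i$, so $\deg(f_i) = e_i$ for all $i$. The tuples $(f_1, \ldots, f_{n-1})$ satisfying this are precisely the elements of $\cF_{e_1, \ldots, e_{n-1}}$, contributing $|\cF_{e_1, \ldots, e_{n-1}}|$ to the count. Second, if $\mcC$ ramifies at $\infty$ with $n_\infty = k$, then $\infty$ itself accounts for one of the $e_k$ points ramifying to degree $k$, forcing $\deg(f_k) = e_k - 1$ and $\deg(f_j) = e_j$ for $j \neq k$; the corresponding data forms $\cF_{e_1, \ldots, e_k - 1, \ldots, e_{n-1}}$, and summing over the $n-1$ possible values of $k$ gives the second term. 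Combining the two contributions yields the stated formula.

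I do not anticipate a serious obstacle beyond careful bookkeeping; the heart of the argument is simply matching up the polynomial data $(f_1, \ldots, f_{n-1})$ with the ramification profile $(e_1, \ldots, e_{n-1}, \epsilon)$. One minor subtlety worth noting is that both cases impose the \emph{same} congruence constraint $\sum_j j\, e_j \equiv 0 \pmod n$ on the tuple $(e_1, \ldots, e_{n-1})$: in case one this is $N \equiv 0 \pmod n$, and in the case $n_\infty = k$ one computes $N = \sum_j j e_j - k$ so that $N + k \equiv 0 \pmod n$ reduces to the same condition. Hence when this congruence fails $|S_{\mathbf{m}}|$ vanishes and so does each $\cF$-set on the right-hand side (there is no way to realize a cover ramifying to the prescribed degree at $\infty$), so the identity remains valid; similarly, when some $e_k = 0$ the symbol $\cF_{\ldots, e_k - 1, \ldots}$ is to be interpreted as the empty set, consistent with the impossibility of $\infty$ ramifying to a degree that does not appear in the geometric ramification.
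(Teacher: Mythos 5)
Your argument is exactly the paper's: a case split on whether the cover is ramified over $\infty$, identifying the unramified case with $\cF_{e_1,\ldots,e_{n-1}}$ and the case $n_\infty=k$ with $\cF_{e_1,\ldots,e_k-1,\ldots,e_{n-1}}$. One small caveat: your closing remark that when $\sum_j j e_j \not\equiv 0 \pmod n$ ``each $\cF$-set on the right-hand side'' vanishes is not right --- $\cF_{e_1,\ldots,e_{n-1}}$ is just a set of polynomial tuples with prescribed degrees and is generally nonempty regardless of the congruence (what happens is that those tuples produce covers ramified at $\infty$ and hence with a different profile) --- but this concerns a degenerate case outside the proposition's intended scope, which the paper explicitly restricts away immediately afterward.
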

    
	 
	\begin{proof}
	    Let $\mcC \in \Fs$, such that $\mcC \rightarrow \PP^1$ is ramified over $\mathbf{m}$ points in $\PP^1(\overline{\F}_q)$. If the map is not ramified over $\infty$, then $\mcC \in \cF_{e_1,e_2 \ldots e_{n-1}}$. If it is ramified over $\infty$ and $n_{\infty} = i$, then $\mcC \in \cF_{e_1,\ldots e_i -1, \ldots e_{n-1}}$.  
	\end{proof}

	In the above proposition, imposing the condition $\mathbf{m} = \sum_{i=1}^{n-1}e_i$, with $e_i$ points occuring with degree $i$, implies that $\sum_{i=1}^{n-1} ie_i \equiv 0 \mod n$.
	Therefore we are interested in the quantity,
	\begin{equation}
	\label{interestsuperell}
	    \sum_{q^{\mathbf{m}} <X} \left( \mid \! \cF_{e_1,e_2 \cdots e_{n-1}} \! \mid  + \sum_{i=1}^{n-1} \mid \! \cF_{e_1, \cdots e_i-1, \cdots e_{n-1}} \! \mid \right)
	\end{equation}
	
	where the sum is over $(e_1,e_2 \ldots e_{n-1})$ such that $\sum_{i=1}^{n-1} ie_i \equiv 0 \mod n$. Further, observe that 
	\begin{align*}
	\sum_{(e_j), q^{e_1+ e_2 \ldots e_{n-1}}<X \atop  \sum ie_i \equiv 0 \mod n} \mid \! \cF_{e_1, \cdots e_i-1, \cdots e_{n-1}} \! \mid &= \sum_{(d_i), q^{d_1 + d_2 \ldots d_{n-1}}< X/q \atop \sum{id_i} \equiv i \mod n } \mid \! \cF_{d_1, \cdots d_i, \cdots d_{n-1}} \! \mid.
	\end{align*}

	Therefore \eqref{interestsuperell} can be rewritten as
	$$
	\left( \sum_{q^{e_1+e_2 \ldots e_{n-1}} <X/q} \mid \! \cF_{e_1, e_2 \ldots e_{n-1}} \! \mid  \right) + \sum_{X/q < q^{e_1 + \ldots e_{n-1}} < X \atop \sum{ie_i} \equiv 0 \mod n } \mid \! \cF_{e_1, e_2 \ldots e_{n-1}} \! \mid,
	$$
	where the first sum is over all tuples $(e_1, e_2 \ldots e_{n-1})$ with $q^{\sum e_i}<X/q$. Thus the number of superelliptic curves with $q^{\mathbf{m}}<X$ is bounded below by the quantity
	$$
	\left( \sum_{q^{e_1+e_2 \ldots e_{n-1}} <X/q} \mid \! \cF_{e_1, e_2 \ldots e_{n-1}} \! \mid  \right). 
	$$
	
	For our proof, we only need a lower bound for the total number of superelliptic curves. Therefore, it suffices to estimate this quantity. 
	
	\begin{notation}
	From now on, a sum of the form 
	$$
	\sum_{q^{e_1+ e_2 \ldots e_r}<X}
	$$
	will denote a sum over all tuples of non-negative integers $(e_1, e_2 \ldots e_r)$, with $q^{\sum e_i}<X$. 
	\end{notation}

	For any non-negative integer $m$, let $$a(m) = \ds \sum_{e_1+e_2 \ldots e_{n-1} = m} \mid \! \cF_{e_1, e_2 \ldots e_{n-1}} \! \mid. $$
	
	Define
    $$
    Z(s) = \sum_{m \ge 0} a(m) q^{-m s}.
    $$

    One way to think about an element of $\cF_{e_1, e_2 \ldots e_{n-1}}$ is to say that we are considering a polynomial
    $$
    f(x) = \prod_{i=1}^{n-1}(F_i(x))^i,
    $$
    
    such that $H := \prod_{i=1}^{n-1} F_i(x)$ is squarefree. We will use this characterization to calculate $Z(s)$.\\
    
    Now consider a squarefree polynomial $H$. Let $H = \prod h_j$ be its factorization into irreducible polynomials. We want to count the number of ways in which $H$ can be written as a product of squarefree polynomials $\prod_{i=1}^{n-1} F_i$. For each factor $h_j$, there are $n-1$ choices of squarefree polynomial that it could divide. Therefore, the number of factorizations $H = \prod_{i=1}^{n-1} F_i$ is 
    \begin{displaymath}
        (n-1)^{\omega(H)}
    \end{displaymath}

    where $\omega(H) =$ the number of distinct irreducible factors of $H$. 
    Therefore, 
    $$
    Z(s) =  \sum_{H \text{ sq. free}} (n-1)^{\omega(H)} \mid \! H \!     \mid^{-s}  = \prod_{Q} (1+ (n-1)\barsQ^{-s}).
    $$
	
	\begin{note}
	    Let $\Phi_k(s) = \prod_{Q} (1+ k\barsQ^{-s})$. Then $\Phi_k(s) \zeta(s)^{-k}$ is a function that converges for $Re(s) > 1/2$. We will denote $\Phi_k(s) \zeta(s)^{-k}$ by $\phi_k(s)$.
	\end{note}


	
	
	

\begin{proposition}
As $X \rightarrow \infty$,
	$$
	N^*(\Fs, X) \ge  \frac{\phi_{n-1}(1)}{q\log(q) (n-2)!} X(\log_q(X))^{n-2} + O(X(\log(X))^{n-3})
	$$

\end{proposition}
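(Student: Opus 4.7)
The plan is to apply the Tauberian theorem \ref{Tauberian} to the Dirichlet series $Z(s) = \sum_{m \ge 0} a(m) q^{-ms}$ already computed above, and then use the displayed inequality
\begin{equation*}
N^*(\Fs,X) \ \ge\ \sum_{q^{e_1+e_2+\cdots+e_{n-1}} < X/q} \barsH_{e_1,e_2,\ldots,e_{n-1}}\! = \sum_{q^m < X/q} a(m)
\end{equation*}
(where $\barsH_{e_1,\ldots,e_{n-1}} := \mid\!\cF_{e_1,\ldots,e_{n-1}}\!\mid$) to translate the asymptotic for partial sums of $a(m)$ into the claimed lower bound for $N^*(\Fs,X)$.

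First I would describe the analytic behavior of $Z(s)$. By the Euler product computation preceding the statement, $Z(s) = \Phi_{n-1}(s) = \phi_{n-1}(s)\,\zeta(s)^{n-1}$, where $\phi_{n-1}(s)$ is holomorphic and bounded in vertical strips for $\operatorname{Re}(s) > 1/2$ (as noted in the paper). Since the rational-function-field zeta $\zeta(s) = 1/(1-q^{1-s})$ has a simple pole at $s=1$ with residue $1/\log q$ and is meromorphic on all of $\C$, the series $Z(s)$ converges for $\operatorname{Re}(s) > 1$, extends meromorphically to $\operatorname{Re}(s) > 1/2$, and has a pole of order exactly $b = n-1$ at $s=1$ with leading coefficient
\begin{equation*}
\Theta := \lim_{s \to 1}(s-1)^{n-1} Z(s) = \frac{\phi_{n-1}(1)}{(\log q)^{n-1}}.
\end{equation*}
The technical growth hypothesis (4) in Theorem \ref{Tauberian} follows because $\zeta(s)^{n-1}$ is periodic with period $2\pi i/\log q$ and $\phi_{n-1}(s)$ is bounded on $\operatorname{Re}(s) \ge 1/2 + \varepsilon$.

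Next I would apply Theorem \ref{Tauberian} with $a = 1$, $b = n-1$, $\lambda_m = q^m$, $c_m = a(m)$. This yields a monic polynomial $P$ of degree $n-2$ and some $\delta > 0$ such that
\begin{equation*}
\sum_{q^m < Y} a(m) = \frac{\Theta}{(n-2)!}\, Y\, P(\log Y) + O(Y^{1-\delta}).
\end{equation*}
Using $(\log Y)^{n-2} = (\log q)^{n-2}(\log_q Y)^{n-2}$ and extracting the leading term gives
\begin{equation*}
\sum_{q^m < Y} a(m) = \frac{\phi_{n-1}(1)}{\log(q)\,(n-2)!}\, Y\,(\log_q Y)^{n-2} + O\bigl(Y(\log Y)^{n-3}\bigr).
\end{equation*}

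Finally I would substitute $Y = X/q$ and combine with the lower bound displayed at the beginning. The factor $1/q$ comes out in front, and expanding $(\log_q(X/q))^{n-2} = (\log_q X - 1)^{n-2} = (\log_q X)^{n-2} + O((\log X)^{n-3})$ produces exactly the asserted inequality. The only mildly subtle step is verifying the growth bound in hypothesis (4) of the Tauberian theorem; this is routine because the rational function field zeta function has no nontrivial zeros off the line $\operatorname{Re}(s) = 1$, so there is no genuine obstacle in the argument.
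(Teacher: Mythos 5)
Your proposal is correct and follows essentially the same route as the paper: factor $Z(s)=\Phi_{n-1}(s)=\zeta(s)^{n-1}\phi_{n-1}(s)$, apply the Tauberian theorem at the order-$(n-1)$ pole at $s=1$, and feed the resulting asymptotic for $\sum_{q^m<X/q}a(m)$ into the lower bound for $N^*(\Fs,X)$ established before the proposition. The extra details you supply (the residue $\phi_{n-1}(1)/(\log q)^{n-1}$, the verification of the growth hypothesis, and the expansion of $(\log_q(X/q))^{n-2}$) are exactly the steps the paper leaves implicit.
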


\begin{proof}
	Note that
	\begin{align*}
			Z(s) &= \Phi_{n-1}(s) \\
			&= \zeta (s)^{n-1} \phi_{n-1}(s).
	\end{align*}
	
	This function has a pole of order $n-1$ at $s = 1$. Thus, the Tauberian theorem implies that
	
	$$
	\sum_{m<X/q}a(m) = \frac{\phi_{n-1}(1)}{q\log(q) (n-2)!} X(\log_q(X))^{n-2} + O(X(\log(X))^{n-3}),
	$$
    which provides a lower bound for $N^*(\Fs,X)$.
\end{proof}

	\begin{corollary}
	\label{superellpropmain}
	    The number of superelliptic covers with invariant $\mathbf{m}$ such that $q^{\mathbf{m}}<X$ is bounded below by
	    $$
	    \kappa_n(q) X^{2/(n-1)} \log_q(X^{2/(n-1)})^{n-2} + O(X^{2/(n-1)} \log(X^{2/(n-1)})^{n-3}),
	    $$
	    
	    where $$
	    \kappa_n(q) =  \frac{q\phi_{n-1}(1)}{\log(q)) (n-2)!}.
	    $$
	\end{corollary}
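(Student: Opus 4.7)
The plan is to deduce this corollary directly from the preceding proposition on $N^*(\Fs, X)$ via the identity $N(\Fs, X) = N^*(\Fs, q^2 X^{2/(n-1)})$ from equation \eqref{numberdefn}. Since $g = (n-1)(\mathbf{m}-2)/2$, the inequality $q^g < X$ is equivalent to $q^{\mathbf{m}} < q^2 X^{2/(n-1)}$, so a lower bound for $N(\Fs, X)$ follows by evaluating the previous proposition's bound at $Y := q^2 X^{2/(n-1)}$.

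The computation is a direct substitution. The leading factor becomes
\[
\frac{\phi_{n-1}(1)}{q \log(q) (n-2)!} \cdot q^2 X^{2/(n-1)} = \kappa_n(q) \, X^{2/(n-1)},
\]
matching the claimed constant. The logarithmic factor expands via the binomial theorem as
\[
(\log_q Y)^{n-2} = \bigl(2 + \log_q(X^{2/(n-1)})\bigr)^{n-2} = \log_q(X^{2/(n-1)})^{n-2} + O\!\bigl(\log(X^{2/(n-1)})^{n-3}\bigr),
\]
and the cross-terms, together with the error term $O(Y (\log Y)^{n-3})$ from the proposition, all contribute at most $O(X^{2/(n-1)} \log(X^{2/(n-1)})^{n-3})$, which matches the stated error.

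There is no substantive obstacle: all the analytic content, in particular the factorization $Z(s) = \zeta(s)^{n-1} \phi_{n-1}(s)$ and the invocation of the Tauberian theorem (Theorem \ref{Tauberian}), has already been carried out in the proof of the previous proposition. The corollary is a purely mechanical consequence of renormalizing the counting parameter from $q^{\mathbf{m}}$ to $q^g$, and the fact that the bound on $N^*(\Fs, X)$ was only a lower bound (stemming from our having dropped the $\cF_{e_1, \ldots, e_i - 1, \ldots, e_{n-1}}$ contributions in \eqref{interestsuperell}) is preserved under the substitution.
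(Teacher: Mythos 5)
Your proposal is correct and follows the paper's own (one-line) proof exactly: the corollary is obtained by substituting $Y = q^2 X^{2/(n-1)}$ into the preceding proposition via the identity $N(\Fs,X) = N^*(\Fs, q^2X^{2/(n-1)})$, and your expansion of the constant and logarithmic factors just makes that substitution explicit.
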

	
	\begin{proof}
	    This follows from the fact that $N(\Fs,X) = N^*(\Fs, q^2X^{2/(n-1)})$.
	\end{proof}

	\subsubsection{Upper bounds for $N^*(\Fs,0,X)$}
	
	In this subsection, we find an upper bound for the quantity $N^*(\Fs,0,X)$, as defined in \eqref{numberdefn}. We will maintain the notation of \S \ref{superellipticbackground}.\\
	
	Suppose we consider covers with $\mathbf{m} = \sum_{i=1}^{n-1} e_i$ ramification points, $e_i$ points occurring with degree $i$. Using the criterion for ordinarity in Proposition \ref{superellordinary}, we can derive the following conditions on $\cF_{e_1, e_2 \ldots e_{n-1}}$:
	\begin{enumerate}
	    \item If $n_{\infty}=0$, $\deg(f_i) = \deg(f_{n-i})$. Note that in this case, the cover belongs to $\cF_{e_1, e_2 \ldots e_{n-1}}$ with $\deg(f_i) = e_i$. Therefore the condition for ordinarity implies that there are
	    $$
	    \mid \! \cF_{e_1, \ldots e_{\frac{n-1}{2}}, e_{\frac{n-1}{2}} \ldots e_1} \! \mid 
	    $$
	    curves of such kind over $\F_q$.
	    
	    \item If $n_{\infty} = i$, then $\mcC \in \cF_{e_1, \ldots e_i-1 \ldots e_{n-1}}$ with $\deg(f_j) = e_j$ for $j \neq i$ and $\deg(f_i) = e_i-1$. Further, the condition for ordinarity gives: for $j \neq i, n-i$, $\deg(f_j) = \deg(f_{n-j})$ and therefore $e_j = e_{n-j}$. Also, $\deg(f_i)+1 = \deg(f_{n-i})$ implies $e_i = e_{n-i}$. Therefore, the number of such curves is,
	    \begin{displaymath}
	    \mid \! \cF_{e_1, e_2 \ldots, e_i-1, \ldots e_{\frac{n-1}{2}}, e_{\frac{n-1}{2}} \ldots, e_{i} \ldots e_2, e_1} \! \mid.
	    \end{displaymath}
	    
	    if $i \le \frac{n-1}{2}$, and 
	    
	    $$
	    \mid \! \cF_{e_1, e_2 \ldots, e_i, \ldots e_{\frac{n-1}{2}}, e_{\frac{n-1}{2}} \ldots, e_{i}-1 \ldots e_2, e_1} \! \mid
	    $$
	    
	    if $i >\frac{n-1}{2}$.
	\end{enumerate}
	
	As in \eqref{interestsuperell}, we are interested in the size
    \begin{equation}
	\label{anumber0}
	    N^*(\Fs,0,X) = \sum_{q^{\mathbf{m}} <X}^* \left( \mid \! \cF_{e_1,e_2 \cdots e_{n-1}} \! \mid  + \sum_{i=1}^{n-1} \mid \! \cF_{e_1, \cdots e_i-1, \cdots e_{n-1}} \! \mid \right)
	\end{equation}
	
	where the sum is now over tuples $(e_1, e_2 \ldots e_{n-1})$ that satisfy the ordinarity criterion $e_i = e_{n-i}$. Note that for such a tuple, the condition $\sum_{i=1}^{n-1} ie_i \equiv 0 \mod n$ is satisfied automatically. We now proceed to find an upper bound on this quantity, using a result of Bucur et. al. in \cite{BDFL09} that we will recall below. Let
	$$
    L_{n-2} = \prod_{j=1}^{n-2} \prod_Q \left(1 - \frac{j}{(\barsQ + 1)(\barsQ + j)} \right)
    $$
	 where the product is over monic irreducible polynomials $Q \in \F_q[x]$.\\
	
	\begin{theorem}[\cite{BDFL09}, Prop 4.3]
	Fix a tuple of positive integers $(e_1,e_2)$. Then, for any $\epsilon >0$ and as $q$ gets large,
	
	\begin{align*}
	\mid \! \mathcal{F}_{e_1, e_2} \! \mid = \frac{L_1 q^{e_1 + e_2}}{\zeta(2)^2} \left(1 + O(q^{-e_2(1-\epsilon)} + q^{-e_1/2}) \right)
	\end{align*}
    \end{theorem}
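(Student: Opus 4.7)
The plan is to extract $|\mathcal{F}_{e_1,e_2}|$ as a coefficient of a two-variable Dirichlet series on $\F_q[x]$ whose Euler product is explicit, and then apply the function-field Perron / Tauberian machinery analogous to Theorem \ref{Tauberian}. Since $(F_1,F_2)$ lies in $\mathcal{F}_{e_1,e_2}$ if and only if at every monic irreducible $Q$ the pair of valuations $(v_Q(F_1), v_Q(F_2))$ lies in $\{(0,0),(1,0),(0,1)\}$, I would begin by writing
\[
Z(s,t) := \sum_{e_1,e_2 \ge 0} |\mathcal{F}_{e_1,e_2}|\,q^{-e_1 s - e_2 t} = \prod_Q\bigl(1 + \barsQ^{-s} + \barsQ^{-t}\bigr),
\]
and factoring out the boundary poles as $Z(s,t) = \zeta(s)\,\zeta(t)\,H(s,t)$, where the local factor of $H$ expands as $1 - \barsQ^{-2s} - \barsQ^{-2t} - \barsQ^{-s-t} + \barsQ^{-2s-t} + \barsQ^{-s-2t}$. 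Every exponent appearing in this local factor has real part at least $\min(2\Re s, 2\Re t, \Re(s{+}t))$, so $H$ is analytic in the region $\Re s, \Re t > \tfrac12$.

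Next I would identify the leading constant. Evaluating the local factor of $H$ at $s=t=1$ gives $1 - 3\barsQ^{-2} + 2\barsQ^{-3} = (\barsQ-1)^2(\barsQ+2)/\barsQ^3$, and one checks by a direct manipulation that this equals $(1 - \barsQ^{-2})^2\bigl(1 - 1/(\barsQ+1)^2\bigr)$. Taking the product over $Q$ yields $H(1,1) = L_1 / \zeta(2)^2$, which is precisely the constant appearing in the statement.

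To produce the asymptotic I would then extract the coefficient of $u^{e_1} v^{e_2}$ in $Z$ (with $u = q^{-s}$, $v = q^{-t}$) by two successive contour integrals. Shifting the $u$-contour past the simple pole of $\zeta(s)$ at $u=1/q$ toward $|u| = q^{-1/2-\eta}$ produces the main contribution $q^{e_1}\zeta(t)H(1,t)$, and since $H$ is absolutely convergent beyond $\Re s = \tfrac12$ the remainder is bounded by $O(q^{e_1/2})$ times a tame function of $t$; this is the source of the $q^{-e_1/2}$ error relative to the main term of size $q^{e_1+e_2}$. Performing the analogous shift in the $t$-variable against $\zeta(t)H(1,t)$ and extracting the residue at $t=1$ gives $q^{e_1+e_2}H(1,1) = L_1 q^{e_1+e_2}/\zeta(2)^2$, and the residual $t$-integral produces the $q^{-e_2(1-\epsilon)}$ error.

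The main obstacle is obtaining the asymmetric error $q^{-e_2(1-\epsilon)}$ rather than the naive $q^{-e_2/2}$ one would get from a blunt contour shift of $\zeta(t)H(1,t)$ to $\Re t = \tfrac12 + \eta$. Closing that gap likely requires either a subconvexity-type bound on $H(1,t)$ inside the critical strip $\tfrac12 < \Re t < 1$, or (more plausibly, and in the spirit of \cite{BDFL09}) an alternative sieve-theoretic treatment: fix $F_2$ of degree $e_2$, express the count of squarefree $F_1$ of degree $e_1$ coprime to $F_2$ via Möbius inversion, and exploit square-root cancellation in the resulting character sums. Either way, the two variables must be handled asymmetrically, and this asymmetric treatment is where the real work of the cited proposition sits.
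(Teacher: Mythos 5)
First, note that the paper offers no proof of this statement: it is imported verbatim from \cite{BDFL09}, Proposition 4.3, so there is no internal argument to compare yours against. Evaluating your proposal against the statement itself: the setup is sound and the constant comes out right. The Euler product $Z(s,t)=\prod_Q(1+\barsQ^{-s}+\barsQ^{-t})$ correctly encodes the local condition, the factorization $Z=\zeta(s)\zeta(t)H(s,t)$ with $H$ absolutely convergent for $\Re s,\Re t>\tfrac12$ is correct, and your identity
\[
1-3\barsQ^{-2}+2\barsQ^{-3}=\frac{(\barsQ-1)^2(\barsQ+2)}{\barsQ^3}=(1-\barsQ^{-2})^2\left(1-\frac{1}{(\barsQ+1)^2}\right)
\]
does give $H(1,1)=L_1/\zeta(2)^2$, matching the leading constant. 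The double coefficient extraction then legitimately yields $|\mathcal{F}_{e_1,e_2}|=\frac{L_1q^{e_1+e_2}}{\zeta(2)^2}\bigl(1+O(q^{-e_1(1/2-\epsilon)}+q^{-e_2(1/2-\epsilon)})\bigr)$.

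The genuine gap is exactly the one you flag yourself: your method does not prove the theorem as stated, because the claimed error term $O(q^{-e_2(1-\epsilon)})$ is strictly stronger than the $O(q^{-e_2/2})$ your contour shift delivers. The obstruction is real, not cosmetic: after specializing $s=1$, the function $H(1,t)$ still contains the local terms $\barsQ^{-2t}$, so its Euler product genuinely stops converging at $\Re t=\tfrac12$, and no contour shift of $\zeta(t)H(1,t)$ can reach $\Re t=\epsilon$ without new input. Deferring the resolution to ``a subconvexity-type bound or an alternative sieve-theoretic treatment'' is an acknowledgment that the key step is missing, not a proof of it; the actual argument in \cite{BDFL09} is the sieve you gesture at (fix $F_2$, count squarefree $F_1$ coprime to $F_2$ by M\"obius inversion, where the exact formula for the number of squarefree polynomials of given degree supplies the saving in $e_1$, and a cruder inclusion--exclusion over the smaller variable supplies the $q^{-e_2(1-\epsilon)}$). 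I would add one mitigating observation: for the only use this paper makes of the result --- the upper bound $|\mathcal{F}_{(e_1,\ldots,e_{n-1})}|\le Kq^{e_1+\cdots+e_{n-1}}$ feeding into Proposition \ref{superellprop0} --- your weaker symmetric error term is entirely sufficient, so your argument would support the paper's application even though it does not prove the quoted proposition.
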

    
    \begin{remark}
        The number of monic polynomials of degree $d$ in $\F_q[x]$ is $q^d$ and the proportion of these that are squarefree is $(1 - 1/q)$. One might expect, similarly, that the proportion of pairs of monic polynomials of degrees $(e_1,e_2)$ that are squarefree and coprime, also form a positive proportion of the total number of pairs of monic polynomials, $q^{e_1+e_2}$. The above theorem shows that this is indeed the case. The next proposition shows that the same is true for $(e_1, e_2 \ldots e_{n-1})$ for any odd prime $n$, although with a weaker error term.
    \end{remark}

    For the next proposition, we refer the reader to \cite{BDFL09}, Corollary 7.2. 
    \begin{proposition}
    Fix a tuple of positive integers $(e_1, e_2 \ldots e_{n-1})$. Fix an $\epsilon >0$. Then, as $q$ gets large, 
    $$
    \mid \mathcal{F}_{(e_1, e_2 \ldots e_{n-1})} \mid  = \frac{L_{n-2} q^{e_1+e_2 \ldots e_{n-1}}}{\zeta(2)^{n-1}}(1 + O(q^{\epsilon(e_2 + \ldots e_{n-1}+q)+ (1-\epsilon)q}(q^{-e_2}+ \cdots q^{-e_{n-1}})  + q^{-(e_1 -3q)/2} ))
    $$
    \end{proposition}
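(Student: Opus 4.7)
The plan is to extend the argument of \cite{BDFL09} for pairs to general $(n-1)$-tuples via a multivariable Euler product and contour analysis. For each monic irreducible $Q \in \F_q[x]$, a tuple $(F_1, \ldots, F_{n-1})$ with each $F_i$ squarefree and the $F_i$ mutually coprime contains $Q$ in at most one $F_i$, and if so to the first power. Hence the generating function
\[
Z(u_1, \ldots, u_{n-1}) \;:=\; \sum_{(F_i)} u_1^{\deg F_1} \cdots u_{n-1}^{\deg F_{n-1}} \;=\; \prod_Q \Bigl(1 + \sum_{i=1}^{n-1} u_i^{\deg Q}\Bigr)
\]
factors as $\prod_{i=1}^{n-1}(1-qu_i)^{-1} \cdot G(u_1,\ldots,u_{n-1})$, where
\[
G(u_1,\ldots,u_{n-1}) \;=\; \prod_Q \Bigl(1 + \sum_i u_i^{\deg Q}\Bigr)\prod_i (1 - u_i^{\deg Q})
\]
is analytic on the polydisk $\{|u_i| < q^{-1/2}\}$; the factor $\prod_i(1-qu_i)^{-1}$ collects precisely the dominant singularities. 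Thus the count $|\cF_{e_1,\ldots,e_{n-1}}|$ is the iterated Cauchy coefficient $[u_1^{e_1}\cdots u_{n-1}^{e_{n-1}}]Z$.

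I would then extract the leading term by pushing the contours past the poles at $u_i = 1/q$: the residue in $u_i$ replaces $(1-qu_i)^{-1}$ by $q^{e_i}$ times a power of $u_i$ evaluated at $1/q$, so the main term is $G(1/q,\ldots,1/q) \cdot q^{e_1 + \cdots + e_{n-1}}$. A direct local factor calculation shows
\[
G(1/q,\ldots,1/q) \;=\; \prod_Q \frac{(|Q|+n-1)(|Q|-1)^{n-1}}{|Q|^n} \;=\; \frac{L_{n-2}}{\zeta(2)^{n-1}},
\]
which I would check by telescoping the product defining $L_{n-2}$ over $j=1,\ldots,n-2$ and matching the local factor of $\zeta(2)^{n-1}$ at $Q$ prime by prime. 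This recovers the claimed main constant.

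For the error term I would shift each contour outward to radius roughly $q^{-1/2+\epsilon}$, picking up boundary integrals whose size is controlled by bounds on $G$ over the shifted polydisk. The asymmetric form of the stated bound suggests singling out $u_1$ (for a second-moment type estimate producing the $q^{-(e_1-3q)/2}$ contribution) while the variables $u_2,\ldots,u_{n-1}$ are each handled by a single contour shift yielding the factor $q^{\epsilon(e_2+\cdots+e_{n-1}+q)+(1-\epsilon)q}(q^{-e_2}+\cdots+q^{-e_{n-1}})$. Both pieces should follow the template of Proposition~4.3 in \cite{BDFL09}, with one dominant pole per additional variable.

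The main obstacle is the bookkeeping around the multidimensional contour shifts: although each variable contributes a single dominant pole, the residues after fixing some variables leave behind integrals over lower-dimensional boundary faces of the polydisk, and uniform control of $G$ on these faces must be established with constants sharp enough to recover the claimed error. Once this analytic input is secured, the rest of the argument is a mechanical generalization of the two-variable case.
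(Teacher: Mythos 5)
Your route is genuinely different from the paper's. The paper does not prove this proposition from scratch: it quotes Corollary 7.2 of \cite{BDFL09} (a count of tuples of mutually coprime squarefree polynomials with prescribed local conditions) and sums that expression over the auxiliary parameters, which is why the peculiar exponents $e_1-3q$ and $\epsilon(e_2+\cdots+e_{n-1}+q)+(1-\epsilon)q$ --- in which the additive $q$'s are really playing the role of the size of a prescribed set of points in the BDFL09 statement --- survive into the final error term. You instead build the multivariable Euler product directly, and that part of your argument is sound: the local factor $1+\sum_i u_i^{\deg Q}$ is correct because each irreducible divides at most one $F_i$ and only to the first power, $G$ converges on the polydisk $\{|u_i|<q^{-1/2}\}$, and your identification
$$
G(1/q,\ldots,1/q)=\prod_Q \frac{(\barsQ+n-1)(\barsQ-1)^{n-1}}{\barsQ^{n}} = \frac{L_{n-2}}{\zeta(2)^{n-1}}
$$
checks out: each local factor of $L_{n-2}$ telescopes to $\barsQ^{n-2}(\barsQ+n-1)(\barsQ+1)^{-(n-1)}$, and multiplying by $(1-\barsQ^{-2})^{n-1}$ recovers your local factor. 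So the main term is right, and your approach is more self-contained than the paper's.

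The gap is the error term, which you flag but understate. After taking residues at $u_i=1/q$ in some subset of the variables, the remaining integrals live on faces of the polydisk where some coordinates sit at $1/q$ and others at radius $q^{-1/2+\epsilon}$, and uniform bounds on $G$ there still have to be proved; more importantly, your contour method would naturally yield an error of the shape $O\bigl(\sum_i q^{\sum_{j\neq i}e_j+(1/2+\epsilon)e_i}\bigr)$ rather than the stated BDFL09-flavoured expression with the additive $q$'s in the exponents, so as written the proposal neither completes the analytic estimate nor matches the form of the statement. In mitigation, the only thing the rest of the paper extracts from this proposition is the crude bound $\mid\!\cF_{e_1,\ldots,e_{n-1}}\!\mid\le K q^{e_1+\cdots+e_{n-1}}$ used in Proposition \ref{superellprop0}, and your main-term-plus-any-power-saving error would deliver that; but to prove the proposition as stated you would either need to carry out the face-by-face estimates explicitly or, as the paper does, simply import the finished result from \cite{BDFL09}.
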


    \begin{proof}
        Consider the expression given in \cite{BDFL09}, Corollary 7.2. Summing the expression over all possible partitions $m = k_1 + k_2 \ldots k_{n-1}$ gives:
         \begin{align*}
             &\frac{L_{n-2} q^{e_1 + e_2 \ldots e_{n-1}}}{\zeta(2)^{n-1}} \left(\frac{n-1}{q+n-1} \right)^m \left(\frac{q}{(q+n-1)(q-1)} \right)^{q-m}\\
             &\times(1 + O(q^{\epsilon(e_2 + \ldots e_{n-1}+q)+ (1-\epsilon)m}(q^{-e_2}+q^{-e_3} \cdots q^{-e_{n-1}})  + q^{-(e_1-m)/2 +q} ))
        \end{align*}
       
        Summing over all possibilities of $m$ now gives the result.
    \end{proof}

    Parsing these propositions tells us that for large enough $q$, 
    $$
    \mid \mathcal{F}_{(e_1, e_2 \ldots e_{n-1})} \mid \le K_1 q^{e_1 + e_2 \ldots e_{n-1}} + K_2 q^{e_1/2 + e_2 \ldots e_{n-1}} + \sum_{i=2}^{n-1} K_{3,i} q^{e_1 + \ldots \epsilon e_i + \ldots e_{n-1}}
    $$
    
    where $K_1, K_2$ and the $K_{3,i}$'s depend on $\epsilon$, $q$ and $n$, but are independent of the $e_j$'s. Since for $\epsilon <1$ the first term in the above expression is the largest, we let $K = \max(K_1, K_2 , K_{3,2} \ldots K_{3,n-1})$ and so for large enough $q$,
    $$
    \mid \mathcal{F}_{(e_1, e_2 \ldots e_{n-1})} \mid \le K q^{e_1+ e_2 \ldots e_{n-1}}.
    $$
    
    Thus \eqref{anumber0} implies that
    \begin{equation}
    \label{anum0bound}
    N^*(\Fs,0,X) \le K \left(\frac{q+n-1}{q} \right) \left( \sum_{q^{2(e_1+e_2 \ldots e_{(n-1)/2})}<X} q^{2(e_1+e_2 \ldots e_{(n-1)/2})}\right).
    \end{equation}
    
    The following lemma will be used to find an upper bound for the expression above. 
    
    \begin{lemma}
    \label{superellbounds}
    As $X$ gets large, 
        $$
        \sum_{q^{e_1+e_2 \ldots e_r}<X} q^{e_1+e_2 \ldots e_r} = O(X \log(X)^{r-1}).
        $$
    Here, the implied constants depend on $q$ and $r$.
    \end{lemma}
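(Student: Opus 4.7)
The plan is to group tuples by their sum and then estimate the geometric-style sum that remains. First I would let $m = e_1 + e_2 + \cdots + e_r$ and rewrite
\[
\sum_{q^{e_1+\cdots+e_r}<X} q^{e_1+\cdots+e_r} \;=\; \sum_{m < \log_q X} \bigl|\{(e_1,\ldots,e_r)\in \Z_{\ge 0}^r : e_1+\cdots+e_r = m\}\bigr|\, q^m.
\]
The number of non-negative integer compositions of $m$ into $r$ parts is the standard stars-and-bars count $\binom{m+r-1}{r-1}$, which is a polynomial in $m$ of degree $r-1$, hence $O(m^{r-1})$ (with the implied constant depending on $r$).

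Next I would set $M = \lfloor \log_q X\rfloor$ and bound the polynomial factor by its value at the top of the summation range, which gives
\[
\sum_{m=0}^{M} \binom{m+r-1}{r-1} q^m \;\le\; C_r\, M^{r-1} \sum_{m=0}^{M} q^m \;\le\; C_r\, M^{r-1} \cdot \frac{q^{M+1}}{q-1}.
\]
Since $q^{M} \le X$ and $M \le \log X/\log q$, the right-hand side is $O(X (\log X)^{r-1})$ with implied constants depending on $q$ and $r$, as claimed.

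There is essentially no obstacle here: the whole content is that the geometric factor $q^m$ dominates the polynomial factor $m^{r-1}$, so the tail of the sum is comparable to its last term, giving $q^{M} \cdot M^{r-1} = O(X(\log X)^{r-1})$. The only point worth double-checking is that the counting of tuples is indeed $O(m^{r-1})$ rather than anything larger, which is immediate from the binomial identity above.
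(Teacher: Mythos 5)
Your proposal is correct and follows essentially the same route as the paper: group tuples by the value of $m = e_1+\cdots+e_r$, count them by the stars-and-bars coefficient $\binom{m+r-1}{r-1} = O(m^{r-1})$ (the paper extracts the same coefficient from $(1-qT)^{-r}$), and then bound the remaining single sum over $m$. The only difference is cosmetic: you bound the polynomial factor by its value at the top of the range and sum the geometric series, whereas the paper splits the range and invokes Euler summation to get a main term with explicit constant; for the stated $O$-bound your shortcut suffices.
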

    
    \begin{proof}
        Consider the expression,
	$$
	\left(\frac{1}{1-qT} \right)^r.
	$$
	
	The coefficient of $T^m$ in this expression is $$ \sum_{e_1 + e_2 \ldots e_r = m} q^{e_1+ e_2 \ldots + e_r }.$$ On the other hand, by the binomial theorem, the corfficient of $T^m$ in $(1-qT)^{-r}$ is
	 $$\begin{pmatrix} r+m-1 \\ r-1 \end{pmatrix}  q^m. $$ Further, 
	$$
	\begin{pmatrix} r+m-1 \\ r-1 \end{pmatrix} \le \frac{(m+r)^{r-1}}{(r-1)!}.
	$$
	
	Therefore, we have that,
    	\begin{align*}
    	    \sum_{q^{e_1+e_2 \ldots e_{r}}<X} q^{e_1+e_2 \ldots e_r} &= \sum_{q^m < X} \sum_{e_1 + e_2 \ldots e_r = m} q^{e_1 + e_2 \ldots e_r} \\
    	    &= \sum_{q^m < X} \begin{pmatrix} r+m-1 \\ r-1 \end{pmatrix}  q^m\\
	        &\le \frac{(2r)^{r-1}}{(r-1)!} \sum_{m<r \atop q^m < X} q^m + \sum_{m \ge r \atop q^m < X} \frac{(2m)^{r-1}}{(r-1)!} q^m\\
	        &= D_r X \log(X)^{r-1} + O(X\log(X)^{r-2})
	    \end{align*}
    
    where the last step follows by Euler Summation. This proves the lemma.
    \end{proof}

    \begin{proposition}
    \label{superellprop0}
    For large enough $q$ (depending only on $n$),
        $$
        N^*(\Fs,0,X) = O(X \log(X)^{\frac{n-3}{2}})
        $$
        Hence, $N(\Fs,0,X) = O(X^{2/(n-1)} \log(X)^{\frac{n-3}{2}})$, where the implied constants depend on $q$ and $n$.
    \end{proposition}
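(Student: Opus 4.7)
The plan is to combine the two ingredients already assembled in the preceding discussion: the explicit bound \eqref{anum0bound} on $N^*(\Fs,0,X)$ coming from the ordinarity criterion, and the combinatorial estimate in Lemma \ref{superellbounds} for sums of geometric terms over tuples of bounded total weight. Nothing more should be needed.

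First I would recall that \eqref{anum0bound} gives
$$
N^*(\Fs,0,X) \;\le\; K\!\left(\frac{q+n-1}{q}\right)\!\sum_{q^{2(e_1+\cdots+e_{(n-1)/2})}<X} q^{\,2(e_1+\cdots+e_{(n-1)/2})},
$$
valid for $q$ large enough that the hypothesis of the preceding propositions on $|\mathcal{F}_{(e_1,\ldots,e_{n-1})}|$ applies. The sum ranges over $(n-1)/2$ non-negative integers, and the variable appearing in each term is $(q^2)^{e_1+\cdots+e_{(n-1)/2}}$.

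Next I would apply Lemma \ref{superellbounds} with $q$ replaced by $q^2$ (the statement of that lemma is insensitive to what the base of the geometric factor is, as the implied constants are allowed to depend on $q$) and with $r=(n-1)/2$. This produces
$$
\sum_{q^{2(e_1+\cdots+e_{(n-1)/2})}<X} q^{\,2(e_1+\cdots+e_{(n-1)/2})} \;=\; O\!\left(X\,\log(X)^{(n-1)/2-1}\right) \;=\; O\!\left(X\,\log(X)^{(n-3)/2}\right),
$$
where the implied constants depend on $q$ and $n$. Substituting back into the bound above yields the first claim $N^*(\Fs,0,X)=O(X\log(X)^{(n-3)/2})$.

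For the second claim, I would invoke the relation $N(\Fs,0,X)=N^*(\Fs,0,q^2 X^{2/(n-1)})$ from \eqref{numberdefn}. Plugging $q^2 X^{2/(n-1)}$ in for $X$ in the bound just obtained gives
$$
N(\Fs,0,X) \;=\; O\!\left(q^2 X^{2/(n-1)} \log\!\big(q^2 X^{2/(n-1)}\big)^{(n-3)/2}\right) \;=\; O\!\left(X^{2/(n-1)}\log(X)^{(n-3)/2}\right),
$$
since $\log(q^2 X^{2/(n-1)})$ differs from $\log X$ by a bounded multiplicative factor (depending on $q$ and $n$) for $X$ large, and the factor $q^2$ is absorbed into the implied constant.

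There is essentially no real obstacle: the work of bounding $|\mathcal{F}_{(e_1,\ldots,e_{n-1})}|$ uniformly, of using the ordinarity criterion to collapse the tuple to $(n-1)/2$ free parameters, and of estimating the resulting sum, has already been carried out. The only minor thing to be careful about is the rescaling $q\leftrightarrow q^2$ when invoking Lemma \ref{superellbounds}, and the identification of the exponent $(n-3)/2 = (n-1)/2 - 1$ as the correct power of $\log X$ produced by that lemma.
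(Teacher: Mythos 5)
Your proposal is correct and follows exactly the paper's own argument: apply the bound \eqref{anum0bound}, invoke Lemma \ref{superellbounds} with $q$ replaced by $q^2$ and $r=(n-1)/2$ to get the exponent $(n-3)/2$, and then use $N(\Fs,0,X)=N^*(\Fs,0,q^2X^{2/(n-1)})$ for the second claim. No differences worth noting.
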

    
    \begin{proof}
        To obtain the first statement, we use Equation \eqref{anum0bound}:
        $$
      N^*(\Fs,0,X) \le K \left(\frac{q+n-1}{q} \right) \left( \sum_{q^{2(e_1+e_2 \ldots e_{(n-1)/2})}<X} q^{2(e_1+e_2 \ldots e_{(n-1)/2})} \right)
        $$
        
        and Lemma \ref{superellbounds}, with $q$ replaced by $q^2$. The second part of the statement follows from the fact that $N(\Fs,0,X) = N^*(\Fs,0,q^2X^{2/(n-1)})$.
        
    \end{proof}

	 We remind the reader here that for the quantity that we are interested in, namely the probability that a superelliptic curve is ordinary, $q$ and $n$ are fixed. Therefore, the fact that the implied constants above depend on $q$ and $n$ will make no difference to the theorem below.

	\begin{theorem}
	\label{Superellmaintheorem}
	    The probability that a superelliptic curve $y^n = f(x)$ over $\F_q$ with $n$ prime and $q$ large enough, is ordinary, is zero. That is,
	    $$
	    \lim_{X \rightarrow \infty} \frac{N(\Fs,0,X)}{N(\Fs,X)} = 0.
	    $$
	\end{theorem}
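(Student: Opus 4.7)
The plan is simply to combine the two estimates established immediately above: Corollary \ref{superellpropmain} provides a lower bound for the denominator $N(\Fs, X)$, while Proposition \ref{superellprop0} provides an upper bound for the numerator $N(\Fs, 0, X)$. Since both are already asymptotic statements of the same power of $X$, the proof reduces to a comparison of logarithmic factors; there is no substantive obstacle left after the counting work in the previous two subsections.

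More precisely, I would first recall that by Corollary \ref{superellpropmain}, for $q$ a fixed power of $2$ and $n$ a fixed odd prime,
$$
N(\Fs, X) \;\ge\; \kappa_n(q)\, X^{2/(n-1)} \bigl(\log_q(X^{2/(n-1)})\bigr)^{n-2} + O\!\left(X^{2/(n-1)} \log(X)^{n-3}\right),
$$
so that the denominator grows at least like a constant times $X^{2/(n-1)} \log(X)^{n-2}$. On the other hand, by Proposition \ref{superellprop0}, for $q$ large enough in terms of $n$,
$$
N(\Fs, 0, X) \;=\; O\!\left(X^{2/(n-1)} \log(X)^{(n-3)/2}\right).
$$
Forming the ratio, the factors $X^{2/(n-1)}$ cancel, and we obtain
$$
\frac{N(\Fs, 0, X)}{N(\Fs, X)} \;=\; O\!\left( \frac{\log(X)^{(n-3)/2}}{\log(X)^{n-2}}\right) \;=\; O\!\left(\log(X)^{-(n-1)/2}\right).
$$

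The only arithmetic to verify is that the exponent of $\log(X)$ is strictly negative, i.e.\ $(n-3)/2 < n-2$. This is equivalent to $n > 1$, which holds since $n$ is an odd prime (in particular $n \geq 3$ gives exponent $-(n-1)/2 \leq -1$). Letting $X \to \infty$ therefore forces the ratio to zero, which proves the theorem. The hypothesis that $q$ be sufficiently large is inherited directly from Proposition \ref{superellprop0} and is harmless since $q$ and $n$ are fixed throughout.
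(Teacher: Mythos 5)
Your proposal is correct and is essentially identical to the paper's own proof, which likewise cites Proposition \ref{superellprop0} for the upper bound on the numerator and Corollary \ref{superellpropmain} for the lower bound on the denominator, and concludes by comparing the powers of $\log(X)$. The only addition you make is the explicit verification that $(n-3)/2 < n-2$, which the paper leaves implicit.
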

	
	\begin{proof}
	    By Proposition \ref{superellprop0}, the numerator, $N(\Fs,0,X)$ is bounded above by $X^{2/(n-1)} \log(X)^{\frac{n-3}{2}}$. By Corollary \ref{superellpropmain}, the denominator is bounded below by $X^{2/(n-1)} \log(X)^{n-2}$. This proves the theorem.
	    
	\end{proof}
	
	\begin{remark}
	\label{EndremarkSE}
	      It is interesting to note that for a given $g$, the space of superelliptic curves of degree $n$ and genus $g$ decomposes over $\overline{\F}_q$ into irreducible components that correspond to partitions of $\mathbf{m} = \sum_{i=1}^{n-1} e_i$ such that $\sum_{i=1}^{n-1}{ie_i} \equiv 0 \mod n$. The ordinary locus intersects a small proportion of these components. For $n=3$, for instance, it only intersects one component. A similar thing was true for the Artin-Schreier locus $\mathscr{AS}_g$. For fixed $p$-rank $s$, one can obtain a combinatorial description of the components contained in the stratum $\mathscr{AS}_{g,s}$ (\cite{PriesZhu12}). One can ask if a similar result holds for the $a$-number strata of the moduli space of superelliptic curves in characteristic 2.
	\end{remark}

\section{Numerical Data}
\label{section:data}
\subsection{Artin-Schreier curves in characteristic $p$}

Here we list some values of constants calculated in \S 3.  Recall that 
$$
P(\mathscr{AS},0) := \left(\frac{1-q^{-1}+q^{-2}}{1+q^{-1}} \right) \phi(1) \zeta(2)
$$
is the probability that an Artin-Schreier curve in characteristic 2 as in \S 3 is ordinary (Corollary \ref{ArtinSchreierMainTheorem}). For brevity, we let $\varphi(q) = \prod_{i=1}^{\infty}(1 + q^{-i})^{-1}$ the constant predicted in \cite{CEZB13}.\\

\begin{center}
\begin{tabular}{|c|c|c|c|c|}
    \hline
    $p$ & $q$ & $\phi(1)$  & $P(\mathscr{AS})$ & $\varphi(q)$ \\
    \hline
     2& 2& 0.314148  &  0.314148 &0.419422 \\
     \hline
     2& 4& 0.593976 & 0.514777 & 0.737512\\
     \hline
     2& 8& 0.776577& 0.702617 & 0.873264\\
     \hline
     2& 16& 0.882162  &  0.833730 & 0.937270\\
     \hline
     2& 32& 0.939367 & 0.911820 & 0.968720\\
     \hline
\end{tabular}
\captionof{table}{Constants for Artin-Schreier curves} \label{Table1}
\end{center}

\bibliographystyle{plain}
\bibliography{References}
\end{document}